\tikzset{main node/.style={circle,draw,minimum size=.6cm,inner sep=0pt},}
\tikzset{blob node/.style={circle,draw,minimum size=3cm,inner sep=0pt},}
\newcommand{\cut}{\mathrm{cat}}
\newcommand{\cat}{\cut}
\newtheorem{theorem}{Theorem}[section]
\newtheorem{corollary}[theorem]{Corollary}
\newtheorem{lemma}[theorem]{Lemma}
\newtheorem{definition}[theorem]{Definition}
\crefname{lemma}{Lemma}{Lemmas}
\title{Extremal Cat Herding}
\author{Rylo Ashmore, Danny Dyer, Rebecca Milley}
\begin{document}
\maketitle
\begin{abstract}
    The game of Cat Herding is one in which cat and herder players alternate turns, with the evasive cat moving along non-trivial paths between vertices, and the herder deleting single edges from the graph. Eventually the cat cannot move, and the number of edges deleted is the cat number of the graph. We analyze both when the cat is captured quickly, and when the cat evades capture forever, or for an arbitrarily long time. We develop a reduction construction that retains the cat number of the graph, and classify all (reduced) graphs that have cat number 3 or less as a finite set of graphs.  We expand on a logical characterization of infinite Cat Herding on trees to describe all infinite graphs on which the cat can evade capture forever. We also provide a brief characterization of the graphs on which the cat can score arbitrarily high. We conclude by motivating a definition of cat herding ordinals for future research.
\end{abstract}
\section{Introduction}

The game of Cat Herding begins with a graph $G$. Initially, the cat player chooses a vertex $v\in V(G)$ to place their token. Then, alternating play, the herder chooses some edge $e
\in E(G)$ to delete, and the cat chooses some non-trivial path to move along to a new vertex. Play proceeds until the cat is \emph{captured} (is on a vertex of degree $0$), at which point the number of edges deleted is the \emph{score} of the game, sometimes expressed as a number of \emph{points} in the game. The game was introduced in \cite{ashmore2024cutscatscompletegraphs}, where a thorough investigation into $\cat(K_n)$ is made, among other preliminary results. We recall some fundamental results from that paper, as we will use them.

First, the game is monotonic with respect to subgraphs. More precisely, we know the following.
\begin{theorem}\label{subgraphMonotone}\cite{ashmore2024cutscatscompletegraphs}
    If $H$ is a subgraph of $G$, then $\cat(H)\leq \cat(G)$. Further, if $v\in V(H)$, then $\cat(H,v)\leq \cat(G,v)$.
\end{theorem}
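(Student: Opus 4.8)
\section*{Proof proposal}

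The plan is to prove the second, stronger assertion --- that $\cat(H,v)\le\cat(G,v)$ for every $v\in V(H)$ --- and then read off the first: since the cat chooses its starting vertex, $\cat(H)=\max_{v\in V(H)}\cat(H,v)\le\max_{v\in V(H)}\cat(G,v)\le\cat(G)$. If $\cat(G,v)=\infty$ there is nothing to prove, so assume it is finite. To bound $\cat(H,v)$ from above it then suffices to exhibit a herder strategy in the game on $(H,v)$ guaranteeing a final score of at most $\cat(G,v)$, since $\cat(H,v)$ is the value of that game.

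I would have the herder play the real game on $(H,v)$ while running a \emph{shadow} game on $(G,v)$ in which a fixed herder strategy guaranteeing score $\le\cat(G,v)$ is followed, and in which the shadow-cat is forced to copy every move the real cat makes. The invariant to maintain after each round is: $H_i\subseteq G_i$ as graphs (vertices are never deleted, so $H_i$ always has vertex set $V(H)$), the cat occupies the same vertex in both games, and both games have deleted the same number of edges. Because $H_i\subseteq G_i$, any non-trivial path the real cat traverses in $H_i$ is also available in $G_i$, so the cat-copying is always legal; and because $E(H_i)\subseteq E(G_i)$, a vertex isolated in $G_i$ is isolated in $H_i$, so while the real game is still in progress the shadow game is too, and the shadow strategy always has a legal response.

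For the herder's move: consult the shadow strategy's recommended edge $e\in E(G_i)$. If $e\in E(H_i)$, delete $e$ in both games. If $e\notin E(H_i)$, delete $e$ in the shadow game but delete an arbitrary edge $f\in E(H_i)$ in the real game; such an $f$ exists, for otherwise the real cat already sits on an isolated vertex and the real game is over. The step needing the most care --- and the only real obstacle --- is verifying that this off-case preserves the invariant: removing $e$ from $G_i$ deletes no edge of $H_i$ since $e\notin E(H_i)$, and $E(H_i)\setminus\{f\}\subseteq E(H_i)\subseteq E(G_i)$ still avoids $e$, so $H_{i+1}\subseteq G_{i+1}$; and each game deleted exactly one edge, so the counts stay equal.

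Finally I would close as follows. The real edge count equals the shadow edge count throughout, and the shadow count never exceeds the shadow game's final score, which is at most $\cat(G,v)$; hence the real count is always at most $\cat(G,v)$. Since the real count strictly increases each round, the real game lasts at most $\cat(G,v)$ rounds and terminates with score at most $\cat(G,v)$. This herder strategy therefore witnesses $\cat(H,v)\le\cat(G,v)$, and the first statement follows as above.
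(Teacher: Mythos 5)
The paper does not prove this theorem: it is imported from \cite{ashmore2024cutscatscompletegraphs} without proof, so there is no in-paper argument to compare against. Your proof is correct and is the standard simulation argument for this kind of monotonicity: the herder plays the real game on $H$ while shadowing an optimal strategy on $G$, substituting an arbitrary edge of $H_i$ whenever the recommended edge lies outside $H_i$; the invariant ($H_i\subseteq G_i$, coincident cat positions, equal deletion counts) does all the work, and you correctly verify the two non-obvious points --- that the off-case preserves the invariant and that the shadow game cannot terminate before the real one.
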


This theorem is often useful, as if we observe that a complicated graph $G$ has a simple graph $H$, such as a path or cycle, as a subgraph, we may provide a lower bound on the cat number of the overall graph $G$.

We also use the cat numbers on cycles, paths, and stars, proven in \cite{ashmore2024cutscatscompletegraphs}.

\begin{theorem}\label{thm:cat_numbers_on_paths_ECH}\cite{ashmore2024cutscatscompletegraphs}
    If $n\geq 2$, then $\cut(P_n)=\lceil\log_2n\rceil$.
\end{theorem}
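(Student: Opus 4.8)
The plan is to establish the two matching inequalities $\cat(P_n)\le\lceil\log_2 n\rceil$ and $\cat(P_n)\ge\lceil\log_2 n\rceil$ by exhibiting explicit strategies. First I would record the structural observation that governs all play on a path: at every moment the component of the current graph containing the cat is a subpath, i.e.\ an interval of consecutive vertices; deleting an edge outside this interval is useless for the herder, while deleting an edge inside it splits the interval into two subintervals and confines the cat to the one containing its current vertex; and on the cat's turn, since any two vertices of a path are joined by a non-trivial path, the cat may relocate to any other vertex of its interval. So the game reduces to: an interval shrinks turn by turn, the herder wants to reduce it to a single vertex, and the only relevant parameters are the length $k$ of the current interval and the position of the cat within it.

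For the upper bound I would have the herder play greedily: if the cat sits at position $i$ in an interval of $k$ vertices, delete an edge incident to the cat's vertex on its longer side, so that the surviving interval has $\min(i,k-i+1)\le\lceil k/2\rceil$ vertices. Letting $k_t$ denote the number of vertices in the cat's component after $t$ deletions, this forces $k_{t+1}\le\lceil k_t/2\rceil$, hence $k_t\le\lceil n/2^t\rceil$ by the nested-ceiling identity $\lceil\lceil x/2\rceil/2\rceil=\lceil x/4\rceil$; taking $t=\lceil\log_2 n\rceil$ gives $k_t=1$, so the cat is on a vertex of degree $0$ and the game has ended after at most $\lceil\log_2 n\rceil$ deletions.

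For the lower bound the cat starts at a centre vertex of $P_n$ and, after each herder deletion, relocates to a centre vertex of its new interval. The point is that while the cat occupies a centre of a $k$-vertex interval, every herder deletion leaves an interval on at least $\lceil k/2\rceil$ vertices, so $k_{t+1}\ge\lceil k_t/2\rceil$ and therefore $k_t\ge\lceil n/2^t\rceil$. Since $2^{\lceil\log_2 n\rceil-1}<n$, after $t=\lceil\log_2 n\rceil-1$ deletions the cat's component still has at least two vertices, so the cat is not yet captured and a further deletion must occur; hence $\cat(P_n)\ge\lceil\log_2 n\rceil$, and combined with the upper bound we obtain equality.

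The hard part will be making the lower-bound strategy airtight against the ``must move'' rule: on an odd-length interval there is a unique centre, and if the cat is sitting on it the cat is forced to step to an adjacent, non-central vertex, which a priori could let the herder shave off an extra vertex on its next cut. The resolution — which I would package either as a strengthened inductive hypothesis or as a short case analysis on which side of the cat the herder cut and on the parity of the surviving interval — is that the cat can only land on the unique centre of an odd surviving interval when that interval has $k-1$ vertices, i.e.\ the herder barely shrank it, so even after the forced step the interval is comfortably larger than $\lceil\cdot/2\rceil$ of its predecessor and the bound $k_t\ge\lceil n/2^t\rceil$ survives. The remaining arithmetic (the nested-ceiling identities and $\lceil\log_2\lceil k/2\rceil\rceil=\lceil\log_2 k\rceil-1$) is routine and I would dispatch it with a one-line lemma.
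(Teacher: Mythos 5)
The paper does not actually prove this statement: it is quoted verbatim from the earlier paper \cite{ashmore2024cutscatscompletegraphs}, so there is no in-paper proof to compare yours against. Judged on its own, your reduction of play on $P_n$ to a shrinking interval is sound, and your upper bound is complete: the herder can always cut the edge at the cat's vertex on the side of the larger remaining piece, forcing $k_{t+1}\le\min(i,k_t-i+1)\le\lceil k_t/2\rceil$, hence capture within $\lceil\log_2 n\rceil$ cuts.

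The gap is in the lower bound, and it sits exactly where you flagged it, but your proposed patch does not yet close it. First, the claim that the cat lands on the unique centre of an odd interval only when the surviving interval has $k-1$ vertices is incomplete: if the cat occupies the unique centre of an odd interval and the herder cuts an edge \emph{outside} the cat's component (a passing move), the surviving interval has $k$ vertices and the cat is still forced off centre. Second, and more seriously, the one-step recursion $k_{t+1}\ge\lceil k_t/2\rceil$ genuinely fails after a forced off-centre step, so it cannot be the engine of the induction: from an even interval of size $k$ the herder can cut to $k-1$ (putting the cat on the unique centre), wait for the forced step to position $k/2\pm1$, and then cut down to $k/2-1<\lceil(k-1)/2\rceil$. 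Saying the $(k-1)$-interval is ``comfortably larger than $\lceil k/2\rceil$'' does not account for this extra loss on the \emph{following} cut. What actually saves the bound is a two-step estimate: in the bad cases one gets $k_{t+2}\ge k_t/2-1$ (real cut) or $k_{t+2}\ge(k_t-1)/2$ (passing move), and the arithmetic $2+\lceil\log_2(k/2-1)\rceil\ge\lceil\log_2 k\rceil$ for $k\ge4$ (and its analogue in the passing case) shows the cat still survives $\lceil\log_2 k\rceil$ cuts. So you need either this explicit two-step recursion or a strengthened induction hypothesis that records whether the cat is currently centred; as written, the lower bound is a plausible sketch rather than a proof.
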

\begin{theorem}\cite{ashmore2024cutscatscompletegraphs}
    If $2k,2k+1\geq 3$, then $\cat(C_{2k+1})=\cat(C_{2k})=\lceil \log_22k\rceil+1$. 
\end{theorem}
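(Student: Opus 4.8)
My plan is to reduce Cat Herding on a cycle to Cat Herding on a path, keeping track of which vertex the cat occupies. Label the vertices of $P_m$ by positions $1,\dots,m$, and for a vertex in position $p$ let $h(P_m,p)$ (respectively $c(P_m,p)$) denote the value of the game on $P_m$ with the cat in position $p$ and the herder (respectively the cat) to move next. The starting point is that deleting any single edge of $C_n$ yields a copy of $P_n$, and, since the cat's token is already down when the herder makes its first deletion, by its choice of edge the herder can make the cat occupy \emph{any} prescribed position of that $P_n$; after this it is the cat's turn. Hence
\[
\cat(C_n)=1+\min_{1\le p\le n} c(P_n,p),\qquad\text{and}\qquad c(P_m,p)=\max_{q\ne p} h(P_m,q)\ \ (m\ge 2),
\]
the second identity because from any vertex of a path the cat can walk to any other vertex.

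The heart of the matter is a position-refined version of Theorem~\ref{thm:cat_numbers_on_paths_ECH}, namely that for $m\ge 2$ one has $h(P_m,p)=1+\lceil\log_2\min(p,\,m+1-p)\rceil$. I would prove this by induction on $m$. For the upper bound the herder deletes the edge that separates the cat into the shorter side, confining it to an end of a sub-path on $\ell:=\min(p,m+1-p)$ vertices, after which the herder finishes in at most $\cat(P_\ell)=\lceil\log_2\ell\rceil$ more moves. For the lower bound one notes that after \emph{any} first deletion the cat is confined to a sub-path $Q$ on at least $\ell$ vertices, so the cat (to move) can run to a centre of $Q$ and thereby force at least $\cat(P_{|Q|})\ge\lceil\log_2\ell\rceil$; the single delicate case is when $Q$ is an odd path and the cat is forced to sit at its unique centre, and there one applies the inductive hypothesis to $Q$. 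Granting this formula, $c(P_m,p)=\lceil\log_2 m\rceil$ for all $p$ when $m$ is even (a path of even order has two adjacent central vertices, so a centre is always reachable), while for $m$ odd one gets $c(P_m,p)=\lceil\log_2 m\rceil$ for non-central $p$ but $c(P_m,c)=\lceil\log_2(m-1)\rceil$ at the unique central vertex $c$, since the cat is forced off the centre and the next-largest value of $h(P_m,\cdot)$ is $1+\lceil\log_2((m-1)/2)\rceil$.

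Substituting into $\cat(C_n)=1+\min_p c(P_n,p)$ settles both cases. For $n=2k$ every $c(P_{2k},p)$ equals $\lceil\log_2 2k\rceil$, so $\cat(C_{2k})=1+\lceil\log_2 2k\rceil$. For $n=2k+1$ the herder's best opening move is the one putting the cat on the central vertex of $P_{2k+1}$, whence $\cat(C_{2k+1})=1+c(P_{2k+1},c)=1+\lceil\log_2 2k\rceil$, and $\lceil\log_2 2k\rceil\le\lceil\log_2(2k+1)\rceil$ confirms this really is the minimum over openings. Equivalently, $\cat(C_{2k})=1+\cat(P_{2k})$ and $\cat(C_{2k+1})=2+\cat(P_k)$, and these agree because $\cat(P_{2k})=1+\cat(P_k)$; the common value is $\lceil\log_2 2k\rceil+1$.

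The step I expect to be the main obstacle is the position-refined path lemma, and inside it the odd-length case where the cat has just been forced to leave the unique central vertex: one must show that the herder cannot profit by repeatedly re-centring the cat on ever shorter odd sub-paths (this is exactly the inductive step), while keeping the ceiling-logarithm bookkeeping honest---e.g.\ $\lceil\log_2 2j\rceil=1+\lceil\log_2 j\rceil$ and $\lceil\log_2(j-1)\rceil\ge\lceil\log_2 j\rceil-1$. Once that lemma is in hand, the reduction and the even case are routine.
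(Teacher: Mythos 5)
This theorem is quoted from \cite{ashmore2024cutscatscompletegraphs} and the present paper gives no proof of it, so there is nothing internal to compare your argument against; judged on its own terms, your argument is correct and complete in outline. The reduction $\cat(C_n)=1+\min_p c(P_n,p)$ is valid (the cycle is vertex- and edge-transitive, so the herder's opening cut lets it place the cat at any position of the resulting $P_n$, after which the cat moves), and your position-refined lemma $h(P_m,p)=1+\lceil\log_2\min(p,m+1-p)\rceil$ is the right invariant: the upper bound (cut toward the shorter side) and lower bound (run to a centre of whichever sub-path survives) both go through by induction. The one delicate point you flag does check out: the cat can fail to reach a centre of the surviving sub-path $Q$ only when it already sits at the unique centre of an odd $Q$, which forces $|Q|=2\ell-1$ with $\ell=\min(p,m+1-p)$, and then stepping off-centre yields $\lceil\log_2(2\ell-2)\rceil\ge\lceil\log_2\ell\rceil$ for $\ell\ge 2$ (the case $\ell=1$ being trivial). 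The final bookkeeping is also right: for odd $m\ge 3$, $m$ is not a power of two, so the non-central and central values collapse to $\lceil\log_2 m\rceil$ and $\lceil\log_2(m-1)\rceil$ respectively, giving $\cat(C_{2k})=\cat(C_{2k+1})=\lceil\log_2 2k\rceil+1$ as claimed. If you write this up, state explicitly that the surviving sub-path has strictly fewer than $m$ vertices so the induction is well-founded.
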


Finally, we recall that stars $S_n=K_{1,n-1}$ have cat number $2$ generally, only excepting small $n$.
\begin{theorem}\cite{ashmore2024cutscatscompletegraphs}
    If $n\geq 2$, then $\cat(S_n)=2$. For small stars, $\cat(S_0)=\cat(K_1)=0,\cat(S_1)=\cat(P_2)=1$.
\end{theorem}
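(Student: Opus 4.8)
The plan is to prove the general claim $\cat(S_n)=2$ for $n\ge 2$ by the standard two‑sided argument — a cat strategy forcing at least two edge deletions and a herder strategy forcing at most two — and then settle the two small stars by direct inspection. Throughout I would use the fact that play in a star is highly constrained: writing $c$ for the center, from any leaf the unique non‑trivial path leads to $c$, and from $c$ every non‑trivial path terminates at a leaf, so the cat is forced to alternate (at most) between $c$ and the leaves, and it never chooses to sit still.

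For the lower bound I would have the cat open on the center $c$. Since $\deg(c)=n\ge 2$, after the herder's first deletion $c$ still has positive degree, so the cat is not yet isolated and the game cannot end after a single deletion; hence the score is at least $2$ no matter how play continues, giving $\cat(S_n)\ge 2$. For the upper bound I would split on the cat's opening token. If the cat is ever on a leaf $\ell$ at the start of the herder's turn, the herder deletes $c\ell$ and captures immediately; so if the cat opens on a leaf, one deletion suffices. If the cat opens on $c$, the herder deletes an arbitrary edge $c\ell_1$, the cat is then forced to move to some leaf $\ell_j$ with $j\ne 1$ (such a leaf exists because $n\ge 2$), and the herder deletes $c\ell_j$ on the next turn, capturing after exactly two deletions. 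In every case at most two edges are deleted, so $\cat(S_n)\le 2$, and with the lower bound $\cat(S_n)=2$.

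For the small cases: $K_1$ has no edges, so the cat's token lies on a degree‑$0$ vertex the instant it is placed and the game ends with zero deletions, whence $\cat(K_1)=0$. For $P_2$ the cat opens on one of the two (degree‑$1$) vertices, so it is not immediately captured and at least one deletion occurs; the herder then deletes the unique edge, and since the cat never got the chance to move it is now isolated, so exactly one deletion occurs and $\cat(P_2)=1$.

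The only thing requiring care is keeping the optimal‑play bookkeeping straight — the cat number is the game value, so the lower bound needs only one good cat opening (the center), whereas the herder strategy must defeat every cat opening, which is precisely why the upper‑bound step is a case split — together with being precise about when capture is checked (immediately after the initial placement, and immediately after each herder deletion). Beyond that the argument is entirely elementary, so I would not expect any real obstacle.
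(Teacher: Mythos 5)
Your argument is correct; the paper does not prove this result itself but imports it from \cite{ashmore2024cutscatscompletegraphs}, and your two-sided argument (cat opens at the center for the lower bound, herder answers the cat's current leaf for the upper bound, direct inspection of $K_1$ and $P_2$) is exactly the standard proof one would expect there. The only point worth flagging is notational: you take the center to have degree $n$, i.e.\ $S_n=K_{1,n}$, which is the convention forced by the stated small cases $S_0=K_1$ and $S_1=P_2$, even though the paper's surrounding prose writes $S_n=K_{1,n-1}$; your argument is insensitive to this since it only uses that the center has degree at least $2$.
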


In this paper, we will recall the full definition of the game of Cat Herding. We will then look to the extremes of Cat Herding. Namely, we examine graphs of low cat number, up to cat number 3. This follows previous characterizations of graph parameters in small cases, such as characterizing graphs with search number up to $3$ in \cite{graph_searching_complexity}, or fast search number up to $3$ characterized in \cite{DERENIOWSKI20131950}. Similar characterizations are done for graphs of any cop number\cite{CLARKE20121421}, including when $1$ cop suffices to capture the robber (copwin graphs). Noting that the game of Cat Herding does not have a corresponding notion of catwin games, we then look at graphs of infinite and unbounded cat number.

In the first two sections, we define a notion of `pruning' a graph. Pruning graphs aims to make the graphs into smaller graphs of equal cat number. Once we have this tool, we prove that the set of pruned graphs of cat numbers 2 or 3 are finite, and we fully categorize them. Of particular interest to this categorization is that there are exactly three (unpruned) graphs of cat number 2 ($P_2,P_3,C_3$), all pruned trees of cat number 3 are paths, and there is a unique pruned graph of cat number 3 with more than one cycle. We observe there is significant technical difficulty in developing the pruning techniques further, as even pruning leaves is non-trivial.

Finally, we end with an extended analysis of infinite cat herding. By allowing our graphs to be infinite, we allow for the cat to evade capture forever (in particular, the obvious bound of $\cat(G)\leq |E(G)|$ may become less relevant). We say that a graph is \emph{cat-win} if the cat has a strategy to avoid being captured on an isolated vertex forever, and the graph is \emph{herder-win} if the herder can eventually isolate the cat to a single vertex. Note then that $\cat(G)\leq |E(G)|$ implies all finite graphs are herder-win. We find a structural characterization of the cat-win trees. We use this to generalize to all cat-win graphs, without the restriction of trees. Finally, we observe that there are still graphs where the cat will be captured in finite time, but they can score arbitrarily high. We characterize the graphs which enjoy this property, and end with a motivation for defining transfinite ordinal cat numbers on infinite graphs.

\section{Graphs of Cat Number at most 2}
There is established precedent that the cat number of graphs can grow without bound (see complete graphs or paths for example). An alternate question to develop intuition for this problem is to examine and classify all graphs of low cat number. In this section, we only consider connected graphs, because \cite{ashmore2024cutscatscompletegraphs} showed that the herder always has an optimal move in the same component as the cat.

If we were to try and consider graphs of cat number $0$, we observe that our game is somewhat degenerate. If the cat chooses a vertex with degree at least $1$, then some edge must be cut to capture the cat. As such, if $\cat(G,v)=0$, then $d(v)=0$. But if the cat starts on an isolated vertex in a connected graph, then $E(G)=\emptyset$. But since $E(G)=\emptyset$, then the herder has no valid move, and the game ends immediately. To avoid this degenerate case, we begin by analyzing graphs of cat number $1$. It turns out, the only way to have cat number $1$ is for $G\cong P_2\cong K_2$.
\begin{theorem}
    If $G$ is a connected graph, then $\cat(G)=1$ if and only if $G\cong K_2$.
\end{theorem}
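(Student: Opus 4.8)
The plan is to prove the two implications separately, with essentially all of the (minimal) content in the forward direction.

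For the reverse direction I would simply play out the game: if $G\cong K_2$ the cat is forced to start on one of the two degree-one vertices, the herder deletes the unique edge, and the cat is then stranded on an isolated vertex, so the game ends with score exactly $1$. This is in any case already recorded in the excerpt as $\cat(P_2)=1$, so I would either cite that or give the one-line argument.

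For the forward direction, suppose $G$ is connected with $\cat(G)=1$. First I would dispense with the degenerate case: a connected graph with no edges is a single vertex, on which the game ends immediately with score $0\neq 1$, so $G$ must contain at least one edge, and hence $|V(G)|\geq 2$. Then I claim $|V(G)|=2$. If instead $|V(G)|\geq 3$, then since $G$ is connected on at least three vertices it cannot have maximum degree $\leq 1$ (a graph of maximum degree $1$ is a disjoint union of edges and isolated vertices, hence disconnected once it has three or more vertices), so some vertex $b$ has two distinct neighbours $a,c$, and the edges $ab,bc$ exhibit $P_3$ as a subgraph of $G$. By \cref{subgraphMonotone} together with \cref{thm:cat_numbers_on_paths_ECH}, $\cat(G)\geq \cat(P_3)=\lceil\log_2 3\rceil=2$, contradicting $\cat(G)=1$. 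Therefore $|V(G)|=2$, and a connected graph on two vertices with an edge is $K_2$.

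I do not expect a genuine obstacle here: the only care needed is at the boundary — ruling out the edgeless single-vertex case so that $\cat(G)=1$ really does force an edge, and invoking subgraph monotonicity with the right small witness. One could alternatively use $C_3$ (with $\cat(C_3)=\lceil\log_2 2\rceil+1=2$), but $P_3$ is the better choice because every connected graph on at least three vertices contains $P_3$ as a subgraph, whereas $C_3$ need not appear.
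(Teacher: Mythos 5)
Your proof is correct and follows essentially the same route as the paper's: both directions are handled by noting $\cat(K_2)=1$ directly and then using subgraph monotonicity with a $3$-path to force $|V(G)|\leq 2$. Your version is slightly more explicit about excluding the edgeless case $K_1$ and about why a connected graph on three or more vertices must contain $P_3$, but there is no substantive difference in approach.
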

\begin{proof}
    Let $G$ be a connected graph. If $G\cong K_2$, then $\cat(K_2)=1$ is trivial. Conversely, if $\cat(G)=1$, then for all vertices $v$, $\cat(G,v)\leq 1$. If $G$ contains a $3$-path, then $\cat(G)\geq 2$ by \Cref{subgraphMonotone}, so $G$ must have no three vertices connected. Thus $|V(G)|\leq 2$, and enumeration verifies the claim.
\end{proof}
Note that a depth-first search (DFS) can detect if the cat number is $1$ in constant time.

\begin{lemma}\label{lem:leafScores1}
    Let $G\ncong K_1$ be a connected graph. For all $v\in V(G)$, $\deg(v)=1$ if and only if $\cat(G,v)=1$.
\end{lemma}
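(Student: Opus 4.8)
The statement is an equivalence, so I will prove the two implications separately, but first record the trivial lower bound that makes the lemma meaningful: since $G$ is connected and $G\ncong K_1$ we have $|V(G)|\ge 2$, hence $\deg(v)\ge 1$ for every $v\in V(G)$, so the cat is never already captured at the moment the herder must move, and $E(G)\neq\emptyset$ so the herder has a legal first deletion. Consequently $\cat(G,v)\ge 1$ for all $v$, and the content of the lemma is precisely that equality holds exactly at the leaves.

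For the direction $\deg(v)=1\Rightarrow\cat(G,v)=1$, I would exhibit an explicit herder strategy: on the first turn, delete the unique edge incident to $v$. Immediately afterward $v$ is an isolated vertex, so the cat is captured and the game ends with score $1$; hence $\cat(G,v)\le 1$, and with the lower bound above, $\cat(G,v)=1$.

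For the converse I would argue the contrapositive: if $\deg(v)\ge 2$ then $\cat(G,v)\ge 2$. After the herder's first deletion at most one edge at $v$ has been removed, so $v$ still has a neighbour $w$; the cat may then traverse the edge $vw$ — a non-trivial path — and so is not on an isolated vertex at that instant, meaning the game cannot end after a single deletion. Since this works against every possible herder first move, $\cat(G,v)\ge 2$. Combining this with the forward direction (and with $\deg(v)\ge 1$, which holds always) yields $\cat(G,v)=1$ iff $\deg(v)=1$.

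I do not expect a serious obstacle here; the only care required is bookkeeping about the order of play and the convention for when capture is declared — one must check that capture is tested immediately after the herder's deletion, and that a non-trivial path of length $1$ is a legal cat move (it is). As with the preceding theorem, it may be worth noting in passing that leaves, and hence vertices of cat number $1$, can be recognised in linear time, though this is not needed for the proof.
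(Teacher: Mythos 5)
Your proposal is correct and follows essentially the same argument as the paper: cut the unique incident edge for the forward direction, and observe that any surviving edge at $v$ after one deletion gives the cat a follow-up move for the converse (which the paper phrases directly rather than as a contrapositive, but the content is identical). The extra bookkeeping about the lower bound $\cat(G,v)\ge 1$ is harmless and does not change the substance.
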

\begin{proof}
    If $v$ has a single edge incident, then the herder cuts it and wins in one move. Any other move leaves the cat a follow up move (along the edge), and so this is the unique optimal move for the herder.

    Alternatively, if $\cat(G,v)=1$, then there is some edge $e$ to cut resulting in the cat's capture at $v$. If $e$ is not incident to $v$, then the cat has a follow-up move. If $e$ is not a unique edge incident to $v$, then the cat has a follow-up move, and $\cat(G,v)\geq 2$. Thus $v$ has a unique edge $e$ incident to $v$, and so $\deg(v)=1$.
\end{proof}

In order to describe graphs of small cat number, we need to understand how to add structures to a graph without changing the cat number. Reversing this process, we obtain which structures can be removed from a graph without changing the cat number. We will then classify the graphs of low cat number which cannot have these removal operations on them.

The first such operation is leaf duplication.
\begin{lemma}
    Let $G$ be a connected graph with vertices $u,v$. If $\deg(G,u)\geq 2$, $uv$ an edge, and $\deg(G,v)=1$, then the graph $G_v$ obtained by adding a new vertex $x$ and edge $ux$ has $\cat(G,a)=\cat(G_v,a)$ for all $a\in V(G)$, and $\cat(G_v,x)=1$. Namely, the cat number does not increase.
\end{lemma}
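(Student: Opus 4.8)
The plan is to dispatch the two easy assertions first and then concentrate on the one substantive inequality. The equality $\cat(G_v,x)=1$ is immediate from \Cref{lem:leafScores1}, since $x$ is a leaf of $G_v$ and $G_v\not\cong K_1$. For $a\in V(G)$, one direction of the claimed equality, $\cat(G,a)\le\cat(G_v,a)$, is immediate from \Cref{subgraphMonotone}, as $G$ is a subgraph of $G_v$. So the whole statement reduces to proving $\cat(G_v,a)\le\cat(G,a)$ for all $a\in V(G)$: together with the previous line this gives the stated per-vertex equality, and since $\cat(G)\ge\cat(G,v)=1=\cat(G_v,x)$ it also gives $\cat(G_v)=\max_{a\in V(G)}\cat(G_v,a)=\cat(G)$, i.e.\ the cat number does not increase.

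To bound $\cat(G_v,a)$ from above I would describe a herder strategy on $G_v$ that shadows an optimal herder strategy on $G$, exploiting that the two pendant leaves $v$ and $x$ hanging off $u$ are interchangeable. Concretely I would argue by induction on $|E(G)|$. If $a$ is a leaf of $G$, then $a$ is also a leaf of $G_v$ (note $u$ is not a leaf, since $\deg_G(u)\ge 2$), so both cat numbers equal $1$ by \Cref{lem:leafScores1}. Otherwise $\deg_G(a)\ge 2$, so $\cat(G,a)\ge 2$; the herder plays a first move $e^\ast$ that is optimal in $G$, and by the Key Lemma below we may choose $e^\ast\ne uv$, so $e^\ast$ is a legal move on $G_v$ and $G_v-e^\ast$ is just $(G-e^\ast)$ with a pendant $x$ re-attached at $u$, still with $v$ pendant at $u$. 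For any vertex $a''$ the cat can move to in $G_v-e^\ast$ I would show $\cat(G_v-e^\ast,a'')\le\cat(G,a)-1$: if $a''=x$, then $\cat(G_v-e^\ast,x)=1$ while the cat's path to $x$ runs through $u$, forcing $v$ to be reachable from $a$ in $G-e^\ast$, whence $\cat(G,a)-1\ge\cat(G-e^\ast,v)=1$; and if $a''\in V(G)$, then the cat's path avoided the leaf $x$, so $a''$ is reachable from $a$ in $G-e^\ast$ as well, and the induction hypothesis applied to $G-e^\ast$ gives $\cat(G_v-e^\ast,a'')\le\cat(G-e^\ast,a'')\le\cat(G,a)-1$ (the case $\deg_{G-e^\ast}(u)=1$ is degenerate, $u$'s component becoming a $K_2$, and would be checked directly). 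Maximising over the cat's replies yields $\cat(G_v,a)\le 1+\max_{a''}\cat(G_v-e^\ast,a'')\le\cat(G,a)$.

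The step I expect to be the real obstacle is the Key Lemma: if $v$ is a pendant of $G$ attached to $u$ with $\deg_G(u)\ge 2$ and the cat is at some $a\ne v$, then the herder has an optimal strategy in $G$ that never deletes the edge $uv$ while the cat is not on $v$; equivalently, cutting the pendant edge $uv$ is never the unique optimal move when the cat is elsewhere. My plan here is an exchange argument. Given an optimal strategy whose move at some position is to cut $uv$ with the cat elsewhere, I would argue the herder can safely postpone that cut: the removed pendant $v$ is \emph{inert}, since by \Cref{lem:leafScores1} the cat never benefits from moving onto $v$, so delaying the deletion of $uv$ cannot help the cat. The only way a deferred pendant could hurt the herder would be to force the cat to waste a move when it is driven onto $u$ with $uv$ the last surviving edge at $u$; but I would observe that this very configuration — herder to move, cat on $u$, $uv$ the only edge at $u$ — is never actually reached under reasonable play, because to have just arrived at $u$ the cat must have traversed an edge still incident to $u$, which here can only be $uv$, forcing the cat to have just been on $v$, a position from which the herder simply wins and never lets the cat escape. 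Making this postponement into a genuine herder strategy of no larger score, uniformly over every response the cat may make, is the delicate bookkeeping I would still need to carry out.
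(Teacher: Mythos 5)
Your proposal is correct and, at its core, leans on the same pivotal fact as the paper's proof: cutting the pendant edge $uv$ is never the herder's \emph{unique} optimal move unless the cat is already on $v$ (or $uv$ is the last edge in the cat's component), so the twin leaves $v$ and $x$ are interchangeable and neither player gains by leaving the copy of $G$ inside $G_v$. The difference is one of packaging. The paper runs a single whole-game simulation: both players shadow optimal play on $G$, the cat reroutes any visit to $x$ through $v$, and the herder never needs to touch $ux$. You instead set up an induction on $|E(G)|$ with a one-step lookahead, which gives a cleaner recursion but obliges you to re-verify the hypotheses on $G-e^\ast$. The one place they can fail --- $\deg_{G-e^\ast}(u)=1$, so that $u$'s component in $G-e^\ast$ is the $K_2$ on $\{u,v\}$ while in $G_v-e^\ast$ it is a $P_3$ of strictly larger cat number at $u$ --- is genuinely harmless, because a cat sitting at some $a\notin\{u,v,x\}$ then lies in a different component and can never enter it; you flag this but should record that observation explicitly. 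The ``delicate bookkeeping'' you defer in your Key Lemma is exactly the exchange argument the paper carries out (itself somewhat informally): replacing the cut of $uv$ by any cut inside $H$, the component of the cat in $G-v$, costs the herder at most the single extra point the cat could collect by hopping onto the leaf $v$ at the very end, which is already absorbed into the bound $\cat(H,\cdot)+1$ that cutting $uv$ achieves; and the configuration you worry about (cat on $u$ with $uv$ the sole surviving edge) only forces the cut of $uv$ when it is the final edge anyway. So neither deferred detail hides an obstruction, and your argument closes along the same lines as the paper's.
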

\begin{proof}
    Let $G,v,G_v$ be as described. Since $G$ is a subgraph of $G_v$, we know that $\cat(G,a)\leq \cat(G_v,a)$ for all vertices $a\in V(G)$ by \Cref{subgraphMonotone}. By construction, we know that $\deg(x)=1$, and so by \Cref{lem:leafScores1}, $\cat(G_v,x)=1$. Let $a\in V(G)$ be an arbitrary vertex. We must show $\cat(G,a)\geq \cat(G_v,a)$.

    Consider a game where both players play optimally in $G$, as a subgraph of $G_v$. We claim neither player will gain by deviating from this strategy.

    We observe that if the herder's unique optimal move in $G$ is to cut $uv$, then the cat is already on $v$, or the cat is on $u$ and $uv$ is the last edge. Suppose alternately, that the cat is on $w\neq v$, and the herder's optimal move is to cut $uv$. Then the cat is restricted to the subgraph currently induced by $V(G)\setminus \{v\}$, say $H$. The cat will go on to score $\cat(H,w')+1$ from their play to $w$. If, alternately, when the cat is on $w$, the herder had chosen any edge $e\in E(H)$ to cut instead, and proceeded to just play in $H$, they would have scored $\cat(H,w')$ also. However, the cat would potentially have access to $v$ on the final turn, and so our actual upper bound is $\cat(H,w')+1$, again (and so the herder does not lose performance with this alternate strategy). As such, a herder cut of $uv$ is only optimal if the cat is already on $v$, or if $E(H)=\emptyset$. Finally, we note that if $E(H)=\emptyset$ and the cat is not on $v$, then the cat is on $u$, since the game has not ended yet. 
    
    The cat can modify any strategy for $G_v$ where they go to $x$ with one where they go to $v$ under the same circumstances (such a move is available since $uv$ is not cut until the cat is on $v$, or it is the final move of the game).
    
    As both result in capture in the next cut, this performs at least as well. As such, without loss of generality, the cat will play on $v$ rather than $x$. The herder only needs to cut $ux$ if the cat goes to $x$ by a similar argument as above with $v$, and so both players will play on $G$ within $G_v$.
\end{proof}

\begin{corollary}
    If $G$ is a graph, let $G'$ be defined by taking all structures $x\sim y\sim z$ with $\deg(x)=\deg(z)=1,\deg(y)\geq 3$, and deleting $z$ (iteratively). Then $\cat(G)=\cat(G')$ and the remaining vertices do not have their cat numbers changed.
\end{corollary}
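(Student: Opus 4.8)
The plan is to read the corollary as an iterated, \emph{reversed} application of the leaf-duplication lemma just proved. Fix one occurrence of the configuration $x\sim y\sim z$ in $G$ with $\deg(x)=\deg(z)=1$ and $\deg(y)\geq 3$, and let $G'=G-z$ be the result of deleting the leaf $z$ together with its edge $yz$. First I would check the hypotheses needed to run the previous lemma on $G'$: since $z$ is a leaf, $G'$ is still connected; the edge $xy$ survives; $\deg_{G'}(x)=1$; and—this is where the bound $\deg(y)\geq 3$ is used—$\deg_{G'}(y)=\deg_G(y)-1\geq 2$. Thus $y$ plays the role of $u$, $x$ plays the role of $v$, and re-attaching a fresh leaf at $y$ recovers $G$ exactly (the new vertex being $z$). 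The leaf-duplication lemma then yields $\cat(G',a)=\cat(G,a)$ for every $a\in V(G')$ and $\cat(G,z)=1$, which already gives the ``remaining vertices are unchanged'' part of the claim for a single deletion.

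Next I would upgrade the per-vertex statement to the global one. Since the cat chooses its start vertex, $\cat(G)=\max_{a\in V(G)}\cat(G,a)$ and likewise for $G'$, and $V(G)=V(G')\cup\{z\}$. The values agree on $V(G')$ by the previous step, and the only extra starting option in $G$ is $z$, with $\cat(G,z)=1$. But $G$ contains the vertex $y$ of degree at least $2$ (it is neither a leaf nor isolated), so $\cat(G,y)\geq 2$ by \Cref{lem:leafScores1} (a degree-one vertex scores exactly $1$, and only degree-zero vertices score $0$); hence $\cat(G)\geq 2$ and the start vertex $z$ is never optimal for the cat. Therefore the two maxima coincide and $\cat(G)=\cat(G')$.

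Finally I would close with the iteration: each deletion removes a vertex, so the process terminates after finitely many steps, at which point no configuration $x\sim y\sim z$ with $\deg(x)=\deg(z)=1,\ \deg(y)\geq 3$ remains; composing the single-step equalities preserves both $\cat$ and all surviving per-vertex cat numbers. I do not expect a serious obstacle here—the real content lives in the already-established lemma. The only points demanding care are (i) that ``iteratively'' must be taken literally, with degrees recomputed in the current graph before each deletion, and (ii) the degree hypothesis $\deg(y)\geq 3$, which is exactly what ensures $y$ retains degree at least $2$ after a deletion so that the lemma keeps applying; if one tried to also strip leaves from a degree-$2$ vertex the hypotheses would fail (and indeed, as the paper notes, pruning leaves in general is delicate).
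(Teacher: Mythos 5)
Your proposal is correct and matches the paper's argument: both perform a single deletion, observe that $\deg_{G'}(y)\geq 2$ with the surviving leaf $x$ adjacent to $y$ so that re-attaching $z$ is exactly the leaf-duplication construction of the preceding lemma, and then induct over the finitely many deletions. The extra step you include—checking via \Cref{lem:leafScores1} that the deleted leaf is never the cat's optimal start, so the global maximum is unaffected—is a detail the paper leaves implicit in the lemma's ``the cat number does not increase,'' but it is a correct and worthwhile clarification rather than a divergence.
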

\begin{proof}
    Let $G$ be such a graph. Consider $G'$ constructed from one such deletion. Then in $G'$ we have that $\deg(y)\geq 2$, $yz$ an edge, and $\deg(G,z)=1$, so the previous result gives us that $G,G'$ have the same cat numbers. By induction we get that any finite number of these operations results in unchanged cat numbers.
\end{proof}

Observe that this is not the same as pruning any pair of leaves sharing a stem to just one leaf, as this would allow pruning $P_3$ to $P_2$, which does change the cat number. We need that the stem also has degree at least $3$.

If $G=G'$ after the pruning process described above (and $G$ is connected), we call $G$ a \emph{pruned graph}. 

We give a classification of vertices of cat number $2$, similar to \Cref{lem:leafScores1}.

\begin{lemma}
    Let $G$ be a connected graph and $v\in V(G)$ with $\cat(G,v)\geq 2$. Then $\cat(G,v)=2$ if and only if there exists an edge $e$ such that $v$ is the center of a star component in $G-e$.
\end{lemma}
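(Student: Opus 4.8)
The plan is to prove the two implications separately, in both cases reducing to the single-edge classification of \Cref{lem:leafScores1}.

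For the ``if'' direction, suppose $e$ is an edge for which $v$ is the center of a star component $C$ of $G-e$. I would have the herder cut $e$ on the first move, after which the cat is confined to $C$. If $C\cong K_1$ the cat is already captured, so $\cat(G,v)\le 1$ (this in fact cannot happen under the hypothesis, since it would force $\deg_{G-e}(v)=0$, hence $\deg_G(v)=1$ and $\cat(G,v)=1$ by \Cref{lem:leafScores1}); otherwise every non-trivial path starting at $v$ inside the star $C$ has length exactly $1$ and terminates at a leaf $\ell$ with $\deg_{G-e}(\ell)=1$, so the herder cuts the edge at $\ell$ and captures the cat with a second cut. Either way $\cat(G,v)\le 2$, and combined with the standing hypothesis $\cat(G,v)\ge 2$ this gives $\cat(G,v)=2$.

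For the ``only if'' direction, suppose $\cat(G,v)=2$ and let $e$ be a herder first move after which the herder can guarantee capture with at most one further cut; such an $e$ exists because $\cat(G,v)\le 2$. Since $\cat(G,v)>1$, cutting $e$ does not already capture the cat, so the cat has a legal reply, and in particular $\deg_{G-e}(v)\ge 1$. Let $w$ be any vertex reachable from $v$ by a non-trivial path in $G-e$; as the endpoint of such a path, $w$ is non-isolated in $G-e$. Because the herder captures within one more cut after every reply, the resulting position satisfies $\cat(G-e,w)\le 1$, hence $\cat(G-e,w)=1$, and applying \Cref{lem:leafScores1} to the component of $w$ in $G-e$ (which is not $K_1$) yields $\deg_{G-e}(w)=1$. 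Taking $w$ to range over the neighbours of $v$ in $G-e$, each reachable along a length-$1$ path, we conclude that every neighbour of $v$ in $G-e$ is a leaf of $G-e$.

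It then remains to extract the star structure. Let $C$ be the component of $v$ in $G-e$. Any vertex of $C$ other than $v$ lies on a path from $v$ whose second vertex is a degree-$1$ neighbour of $v$, so that path cannot continue past it; hence $C$ consists precisely of $v$ together with its neighbours, all of degree $1$, i.e.\ $C\cong K_{1,\deg_{G-e}(v)}$ with center $v$. Thus $v$ is the center of a star component of $G-e$, as required. I expect the only genuine subtlety to be the bookkeeping of the game's recursive structure in the ``only if'' direction --- namely justifying carefully that $\cat(G,v)=2$ really produces a first cut after which every cat reply lands on a vertex of cat number $1$ --- together with disposing cleanly of the degenerate cases in which $v$ would be isolated after the cut, or in which the star component is as small as $K_1$ or $K_2$.
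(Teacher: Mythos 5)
Your proposal is correct and follows essentially the same route as the paper: the ``if'' direction has the herder cut $e$ so the cat is forced onto a leaf of the star, and the ``only if'' direction takes the herder's first optimal cut $e$, observes every cat reply must land on a vertex of cat number $1$, and invokes \Cref{lem:leafScores1} to conclude all such vertices are leaves, so the component of $v$ in $G-e$ is a star. You simply spell out more of the bookkeeping (the degenerate $K_1$ case and the extraction of the star structure) than the paper does.
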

\begin{proof}
    ($\Rightarrow$) This is immediate from examination of statement: the herder cuts edge $e$, and the cat must move to a leaf, scoring one more.

    ($\Leftarrow$) If $\cat(G,v)=2$, then the herder must have a move series capturing the cat in at most $2$ cuts, say first deleting edge $e$. After this move, the cat must move to a vertex where they have $\cat(G,x)=1$, and by \Cref{lem:leafScores1}, this must be a leaf. Thus, the vertex $v$ in $G-e$ is at the center of a star component.
\end{proof}

In order to fully classify cat number $2$ graphs, we need to identify forbidden structures, namely, ways to make cat number $3$.

\begin{lemma}
    If $G$ is a connected graph, $v\in V(G)$, and for all $e\in E(G)$, $v$ is an endpoint of a $3$-path or in a triangle in $G-e$, then $\cat(G,v)\geq 3$.
\end{lemma}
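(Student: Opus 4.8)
The plan is a one-move lookahead: I claim that whatever edge the herder deletes first, the cat can walk a single step to a vertex of degree at least $2$, which by \Cref{lem:leafScores1} is worth at least $2$ more points, so the game is worth at least $3$. Formally, since the hypothesis (applied to any $e$) is non-vacuous only when $E(G)\neq\emptyset$ — we may harmlessly assume this, as the sole graph satisfying the hypothesis vacuously is $K_1$ — the cat has degree at least $1$ and the herder must move, so $\cat(G,v) = 1 + \min_{e\in E(G)}\max_{a}\cat(G-e,a)$, the maximum being over vertices $a$ reachable from $v$ by a non-trivial path in $G-e$. Hence it suffices to show: for every $e\in E(G)$ there is a vertex $a$ with $va\in E(G-e)$ and $\cat(G-e,a)\ge 2$.

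So fix $e\in E(G)$ and invoke the hypothesis on $G-e$. In the first case $v$ is an endpoint of a path $v\,a\,b$ on three distinct vertices of $G-e$; in the second, $v,a,b$ span a triangle in $G-e$. In either case $va\in E(G-e)$ and $a$ is adjacent both to $v$ and to a vertex $b\neq v$, so $\deg_{G-e}(a)\ge 2$. Sitting on $v$ after the herder's cut, the cat moves along the single edge $va$ to $a$, a legal non-trivial move to a new vertex.

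It remains to bound $\cat(G-e,a)$. Let $C$ be the component of $a$ in $G-e$; since the herder may be assumed to play inside the cat's component (as recalled at the start of this section), $\cat(G-e,a)=\cat(C,a)$. Because $\deg_C(a)=\deg_{G-e}(a)\ge 2$, we have $C\ncong K_1$ and $\deg_C(a)\neq 1$, so \Cref{lem:leafScores1} gives $\cat(C,a)\neq 1$; also $\cat(C,a)\neq 0$ since $\deg_C(a)\ge 1$. Thus $\cat(G-e,a)\ge 2$, and therefore $\cat(G,v)\ge 1+2=3$.

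I do not expect a real obstacle here; the only points needing care are (i) that we are exhibiting a \emph{cat} strategy, which is precisely what a lower bound on $\cat(G,v)$ requires; (ii) that $G-e$ may be disconnected, which is harmless thanks to the component remark; and (iii) the degenerate case $G\cong K_1$, excluded by assuming $G$ has an edge. If one prefers to avoid quoting \Cref{lem:leafScores1}, step three can be replaced by a direct two-line argument: after the herder's next cut $f$, the vertex $a$ still has an incident edge (as $\deg_{G-e}(a)\ge 2$), so the cat can move again and at least one further cut is needed, giving $\cat(G-e,a)\ge 2$.
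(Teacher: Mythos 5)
Your proof is correct and follows essentially the same route as the paper's: after any first cut $e$, the cat steps to the middle vertex of the $3$-path (or a neighbour in the triangle), which has degree at least $2$ in $G-e$ and hence is worth at least $2$ more points, giving a total of at least $3$. You simply make explicit the bookkeeping the paper leaves implicit (the game recursion, the appeal to \Cref{lem:leafScores1}, the component remark, and the vacuous $K_1$ case), all of which is sound.
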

\begin{proof}
    Let $G$ be connected and $v\in V(G)$. Suppose the cat is at $v$ and the herder cuts $e$. If the cat is on the endpoint of a $P_3$, then they move to the center. If they are in a $K_3$, they move anywhere in the $K_3$. Either scores at least two more points plus $e$, so they score at least $3$.
\end{proof}

We introduce a common structure allowing the cat to score at least $3$ points.

\begin{definition}
    A \emph{cycle with a tail} is a graph induced by the edges of a cycle $C$ and one edge $e\not\in E(C)$ that meets the cycle's vertices ($V(e)\cap V(C)\neq \emptyset$).
\end{definition}

Observe that this allows cycles with a chord, or a cycle with an edge to a new vertex. It is sometimes useful to consider when the herder's move doesn't matter. In particular, if the herder makes a cut in a different component than the one the cat is in, we call this a \emph{passing move}.

\begin{lemma}\label{lem:circTail}
    If $G$ contains a cycle with a tail, then $\cat(G)\geq 3$, with at least two connected vertices $u,v$ such that $\cat(G,u)=\cat(G,v)=\cat(G)$.
\end{lemma}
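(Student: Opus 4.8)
The plan is to pin down two adjacent vertices of the cycle‑with‑a‑tail and feed them to the previous lemma (the one asserting $\cat(G,w)\ge 3$ once $w$ lies at the end of a $3$‑path or in a triangle of $G-f$ for \emph{every} edge $f$). Fix a cycle‑with‑a‑tail subgraph $H\subseteq G$: a cycle $C=c_1c_2\cdots c_kc_1$ together with an edge $e_0\notin E(C)$ meeting $V(C)$. There are two cases, according to whether $e_0$ is a pendant, $e_0=c_1t$ with $t\notin V(C)$, or a chord, $e_0=c_ic_j$ with $c_i,c_j$ not consecutive on $C$ (so $k\ge 4$). In the pendant case I would take $u=c_2$ and $v=c_3$; in the chord case I would take $u=c_i$ and $v=c_j$, which are adjacent through $e_0$.

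The key simplification is that a single deletion $f$ can only disturb the ``$3$‑path or triangle'' property of a vertex of $H$ when $f\in E(H)$: if $f\notin E(H)$ then $H$ sits untouched inside $G-f$, so any $3$‑path or triangle of $H$ through that vertex is still present. It therefore suffices to verify the property for $u$ and for $v$ inside $H$ itself and inside each $H-f$ with $f\in E(H)$ --- a finite check, parametrized only by the cycle length $k$ and by which of the two cases we are in. For $u$: deleting a cycle edge at $u$ leaves exactly one cycle neighbour, which begins a $3$‑path running the other way around $C$ (or, when needed, out along $e_0$); deleting any other edge of $H$ leaves both cycle neighbours of $u$, which gives a triangle when $k=3$ and a $3$‑path when $k\ge 4$; and deleting $e_0$ leaves $u$ inside an intact $C_k$. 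Exactly the same works for $v$. By the previous lemma, $\cat(G,u)\ge 3$ and $\cat(G,v)\ge 3$, and $u$ and $v$ are adjacent; in particular $\cat(G)\ge 3$.

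What remains is the assertion that $u$ and $v$ can be chosen so that $\cat(G,u)=\cat(G,v)=\cat(G)$ rather than merely $\ge 3$. When $\cat(G)=3$ this is immediate --- two vertices with $\cat(G,\cdot)\ge 3$ automatically realize the maximum --- and this is the only regime in which the refinement is needed later (in the classification of graphs of cat number at most $3$). For the general statement one must instead start from a vertex realizing $\cat(G)$, follow an optimal cat run, and argue that it can always be routed through an adjacent pair at which the full value $\cat(G)$ is still available. I expect that step --- simultaneously keeping control of adjacency and of the exact score --- to be the main obstacle; the rest is the bounded case analysis above.
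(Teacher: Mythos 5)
Your core argument is sound and is essentially the paper's proof in modular form. The paper argues directly: the cat starts anywhere on $V(C)\setminus\{v\}$, where $v$ is the cycle endpoint of the tail edge (this covers the pendant and chord cases uniformly, so no case split is needed); if the herder cuts the tail or makes a passing move, the cat plays on the intact cycle for at least $2$ more points, and if the herder cuts a cycle edge, the cat moves to $v$, which had degree at least $3$ and hence is still the centre of a $P_3$, again for at least $2$ more. You package the same two-move survival argument through the preceding ``endpoint of a $3$-path or in a triangle after every cut'' lemma plus a finite check; that works, though note that in your $k\ge 4$ subcase ``both cycle neighbours of $u$ survive'' does not by itself make $u$ an endpoint of a $P_3$ --- you additionally need one of those neighbours to retain a second incident edge, which holds because only one edge has been deleted.

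On the clause $\cat(G,u)=\cat(G,v)=\cat(G)$ that you identify as the main obstacle: the paper's own proof does not establish it either; it proves exactly what you prove, namely two adjacent vertices with $\cat(G,\cdot)\ge 3$. Every later invocation of this lemma uses only that weaker fact --- the adjacency is the operative content, since it guarantees that a cat anywhere in the component can reach one of $u,v$ by a nontrivial move and then score at least $3$ more. So you have not fallen short of the paper's argument; you have located an overstatement in the lemma as written, and the general-$\cat(G)$ refinement you were bracing to prove is neither proved in the paper nor needed downstream.
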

\begin{proof}
    Let $G$ contain cycle $C$ and edge $uv$ incident to the cycle, say at $v\in V(C), uv\not\in E(C)$. Then the cat starts anywhere in $V(C)-\{v\}$ (at least two options). If the herder cuts $uv$ (or makes an passing move), then the cat plays on the cycle, scoring at least $2$ more, netting a total of three points. Otherwise, the herder cuts in the cycle, and the cat then moves to $v$, which must be the center of a $3$-path as $\deg(v)\geq 3$ before the single cut. This also scores at least $2$ more, netting a total of $3$ points.
\end{proof}
Now we can classify graphs of cat number $2$.
\begin{figure}
    \centering
    \begin{subfigure}[c]{.32\textwidth}
        \centering
        \begin{tikzpicture}
            \node[main node] (a) at (-1,0) {};
            \node[main node] (b) at (1,0) {};
            \node[main node] (c) at (0,1.75) {};
            \draw (c) -- (b) -- (a) -- (c);
        \end{tikzpicture}        \caption{$C_3$}
        \label{fig:cat2cycle3}
    \end{subfigure}
    \begin{subfigure}[c]{.32\textwidth}
        \centering
        \begin{tikzpicture}
            \node[main node] (a) at (0,-1.75) {};
            \node[main node] (b) at (0,0) {};
            \node[main node] (c) at (0,1.75) {};
            \draw (a) -- (b) -- (c);
        \end{tikzpicture} 
        \caption{$P_3$}
        \label{fig:cat2path3}
    \end{subfigure}
    \begin{subfigure}[c]{.32\textwidth}
        \centering
        \begin{tikzpicture}
           \node[main node] (a) at (0,3) {};
            \node[main node] (b) at (0,1.5) {};
            \node[main node] (c) at (0,0) {};
            \node[main node] (d) at (0,-1.5) {};
           
            \draw (a)--(b)--(c)--(d);
        \end{tikzpicture}
        \caption{$P_4$}
        \label{fig:cat2path4}
    \end{subfigure}
   
    \caption{Types of (pruned) graphs with cat number 2.}
    \label{fig:cat2}
\end{figure}
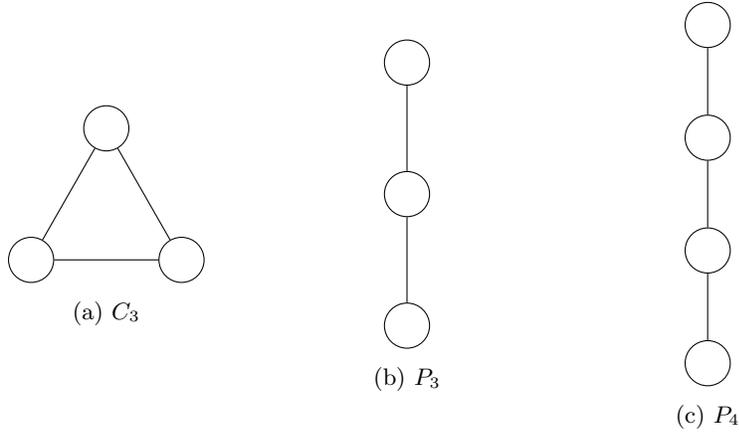
\begin{theorem}
    Let $G$ be a pruned graph. Then $\cat(G)=2$ if and only if $G\cong C_3,P_3,P_4$. 
\end{theorem}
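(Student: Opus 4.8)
The plan is to prove both implications by combining subgraph monotonicity (\Cref{subgraphMonotone}) with the known cat numbers of paths and cycles (\Cref{thm:cat_numbers_on_paths_ECH} and the cycle theorem) and the ``cycle with a tail'' obstruction (\Cref{lem:circTail}), after which the forward direction reduces to a short case split on whether $G$ is acyclic.

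For the ``if'' direction I would simply compute: $\cat(P_3)=\lceil\log_2 3\rceil=2$ and $\cat(P_4)=\lceil\log_2 4\rceil=2$, while $\cat(C_3)=\lceil\log_2 2\rceil+1=2$. I would also check that each of these three graphs is already pruned: the pruning operation requires a vertex of degree at least $3$ adjacent to two leaves, and $C_3$ has no leaf at all while $P_3$ and $P_4$ have no vertex of degree $\geq 3$, so the operation does nothing. Hence each of $C_3,P_3,P_4$ is a pruned graph of cat number $2$.

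For the ``only if'' direction, suppose $G$ is a (connected) pruned graph with $\cat(G)=2$. By \Cref{subgraphMonotone}, $G$ contains no subgraph of cat number at least $3$; in particular $G$ contains no $P_5$ (as $\cat(P_5)=\lceil\log_2 5\rceil=3$), no cycle of length $\geq 4$ (as $\cat(C_m)\geq\lceil\log_2 4\rceil+1=3$ for $m\geq 4$), and no cycle with a tail (\Cref{lem:circTail}). If $G$ has a cycle, it must be a triangle $T$; and if $G$ had any edge outside $T$, connectivity would produce an edge not in $T$ but incident to a vertex of $T$ (the last edge of a shortest path from that edge to $T$), and $T$ together with this edge is a cycle with a tail, a contradiction — so $G\cong C_3$. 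If $G$ is acyclic it is a tree whose longest path has at most $4$ vertices, i.e.\ diameter at most $3$; since $\cat(G)=2$ rules out $K_1$ and $K_2$, the diameter is $2$ or $3$. A diameter-$2$ tree is a star $K_{1,n-1}$ with $n\geq 3$, and if $n\geq 4$ the center has degree $\geq 3$ and two leaf-neighbours, contradicting that $G$ is pruned, so $G\cong P_3$; a diameter-$3$ tree is a double star consisting of an edge $ab$ with $p\geq 1$ leaves at $a$ and $q\geq 1$ leaves at $b$ (anything further would create a $P_5$), and being pruned forces $p=q=1$, so $G\cong P_4$.

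I expect the only delicate point is the tree analysis: cleanly justifying that ``no $P_5$ subgraph'' forces diameter-$2$ trees to be stars and diameter-$3$ trees to be double stars, and then that the pruning hypothesis collapses each of these families to a single graph. The cyclic case and the ``if'' direction are direct appeals to the quoted results.
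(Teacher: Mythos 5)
Your proof is correct, and it takes a genuinely different route from the paper's. The paper first proves a vertex-level characterization ($\cat(G,v)=2$ iff some single cut leaves $v$ at the center of a star component), then fixes such a vertex $v$ and its witnessing edge $e_v$ and case-splits on whether $e_v$ is incident to $v$, reconstructing the graph locally around $v$. You instead argue globally by forbidden subgraphs: monotonicity (\Cref{subgraphMonotone}) plus the known values $\cat(P_5)=3$, $\cat(C_m)\geq 3$ for $m\geq 4$, and \Cref{lem:circTail} exclude $P_5$, long cycles, and cycles with a tail, which immediately forces $G\cong C_3$ in the cyclic case and forces a tree of diameter at most $3$ in the acyclic case; the classification of such trees as stars and double stars plus the pruned hypothesis then collapses everything to $P_3$ and $P_4$. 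Your route is shorter and more modular, leaning entirely on results already quoted in the introduction rather than on a new structural lemma; its one mild cost is that the forbidden-subgraph method is specific to this small case (for cat number $3$ the paper must return to finer strategy-based case analysis, and the star-component lemma is the kind of local tool that scales there), whereas the paper's argument previews that style. Both arguments correctly use only the leaf-duplication notion of ``pruned'' in force at this point in the paper, and your verification that $C_3$, $P_3$, $P_4$ are themselves pruned is a detail the paper leaves implicit.
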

\begin{proof}
    The graphs $G\cong C_3,P_3,P_4$ are shown in \Cref{fig:cat2} for reference. 
    
    ($\Leftarrow$) Suppose that $G$ is a connected pruned graph. It is clear that if $G\cong C_3,P_3,P_4$, then $\cat(G)=2$ by examining each of the cases.

    ($\Rightarrow$) Suppose that $G$ is a connected pruned graph, and that $\cat(G)=2$. Let $V$ be the set of vertices $v$ such that $\cat(G,v)=2$. Note that $V\neq \emptyset$ since $\cat(G)=2$. For any $v\in V$, we have $\cat(G,v)=2$, and there exists edge $e_v$ such that $G-e_v$ has $v$ the center of a star component. Further, there are no vertices $v'$ for which $\cat(G,v')\geq 3$. We now consider cases: either there is some $v\in V$ such that all $e_v$ is non-incident with $v$, or all $v\in V$ have some $e_v$ incident with $v$.

    \begin{enumerate}
        \item If $e_v=xy$ is not incident to $v$ for some $v\in V$, then we fix such a $v$. Then $v$ is the center of a star component in $G-xy$. Since $G$ is connected and $v$ is a center of a star in $G-xy$, at least one of $x,y$ is incident to $v$. Without loss of generality, say $xv$ is the edge.
        \begin{enumerate}
            \item If then $vy$ is an edge, then $vxy$ is a cycle. Further, there can be no other pendant edges to $v,x,y$ (other than $vx,vy,xy$), since \Cref{lem:circTail} would give the graph cat number $3$. Thus if $vy$ is an edge, then $vxy$ forms $C_3$ and there are not other vertices. 
    
            \item Otherwise, $vy$ is not an edge. Since $v$ has cat number $2$, it must have degree at least two, and thus has some other adjacent vertex, say $w$. For every vertex $w\in N[v]$, either $w$ is a leaf in $G$, or $w=x$ or $w=y$, by $v$ being the center of a star component in $G-xy$. We know $y\not\in N[v]$, so $w=x$ or $w$ is a leaf in $G$. For $w\neq x$, duplicates will be pruned by the pruning process, so we know that $w$ is a unique vertex incident to $v$ (other than $x$). Thus $wvxy$ is a path. If any vertex $z\neq x$ is adjacent to $y$, then $wvxyz$ is a path, and $P_5$ has cat number three, violating our assumption. Similarly, if additional leaves are pendant from $x,v$, then they would have been pruned, and adding any more structure beyond a leaf to $x,v$ creates a $5$-path with cat number at least $3$. Thus $d(w)=d(y)=1,d(v)=d(x)=2$, and we have $P_4$.

        \end{enumerate}
    \item Finally, if $e_v=vx$ for all $v\in V$, then let $v,vx$ be such a vertex and edge pair. It follows in $G-vx$ that $v$ is the center of a star. Thus $\deg(v)\geq 2$. If $\deg(v)>2$, say $vw_1,vw_2$ are both edges not meeting $x$, then $v$ the center of a star in $G-vx$ means that $w_1,w_2$ are both leaves, and one should have been pruned. Thus $w$ is the (optional) unique non-$x$ vertex incident to $v$, and $\deg(w)=1$. Now, either $\deg(x)=1$ and $G=wvx$ is $P_3$, or $\deg(x)\geq 2$, and then a symmetric argument on $x$ forces $x$ to have at most one non-$v$ vertex incident, say $y$, which must be a leaf. Then $G=wvxy$, and we get $P_4$.
    \end{enumerate}
    Since in all cases, we find that $G$ is one of our three graphs, we complete this direction of the argument.
\end{proof}
Pruning degree $1$ vertices as described takes time linear in the graph, after which a constant time algorithm can detect if the resulting graph is one of the identified structures.
\section{Graphs of Cat Number 3}
Having determined the graphs of cat number up to $2$, we proceed with cat number $3$. Cat number $3$ graphs are not too much harder. In particular, having multiple cycles in a graph is prone to making the cat number large, and there is only one way to have cat number $3$ with multiple cycles.
\begin{lemma}\label{lem:cat3_2cycles}
    If connected graph $G$ has two distinct cycles $C_1,C_2$ and $\cat(G)=3$, then $|V(G)|=6$, and $G$ is two disjoint $3$-cycles with a bridge between them.
\end{lemma}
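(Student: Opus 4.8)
The plan is to reduce the statement to one game‑theoretic ``escape lemma'' together with a structural analysis of the blocks of $G$. As a preliminary I would record, using \Cref{subgraphMonotone} and the cycle values $\cat(C_3)=2$, $\cat(C_4)=\cat(C_5)=3$, and $\cat(C_n)\geq 4$ for $n\geq 6$: since $\cat(G)=3$, every cycle of $G$ has length $3$, $4$, or $5$.

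The escape lemma I would establish is: \emph{if $H$ is connected, $|V(H)|\geq 4$, and $H$ contains a cycle, then $\cat(H,y)\geq 3$ for at least two vertices $y\in V(H)$, and from any vertex of $H$ there is a non‑trivial path to such a $y$.} Reachability is immediate; for the cat‑number bound, take a shortest cycle $C^\ast$ of $H$ (length $3$, $4$, or $5$). If $|C^\ast|\geq 4$ and $C^\ast=H$, then $\cat(H,y)=\cat(C_{|C^\ast|})=3$ for all $y$. If $|C^\ast|\geq 4$ and $C^\ast\neq H$, connectedness forces some $u\in V(C^\ast)$ with $\deg_H(u)\geq 3$, so $C^\ast$ is a cycle with a tail; \Cref{lem:circTail}'s argument gives $\cat(H,y)\geq 3$ for $y\in V(C^\ast)\setminus\{u\}$, and for $y=u$ one checks, using \Cref{lem:leafScores1} and the characterisation of cat number $2$ at a vertex, that $\cat(H,u)=2$ would require an edge joining the two $C^\ast$‑neighbours of $u$, impossible since $|C^\ast|\geq 4$. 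If $|C^\ast|=3$, then $|V(H)|>3$ again forces some $u\in V(C^\ast)$ with $\deg_H(u)\geq 3$, and \Cref{lem:circTail} handles the other two vertices of $C^\ast$.

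The lemma is used through one recurring move: if the cat starts at a vertex $x$ on a cycle of $G$ and, for \emph{every} first cut $e$, the component $H_e$ of $G-e$ containing $x$ is connected with $|V(H_e)|\geq 4$ and still contains a cycle, then after $e$ the cat walks to an escape vertex $y$ of $H_e$ and scores $1+\cat(H_e,y)\geq 4$, contradicting $\cat(G)=3$. I would apply this to each forbidden configuration. (i) No block of $G$ is $2$‑connected of cycle rank $\geq 2$: such a block is bridgeless with $\geq 4$ vertices, so $H_e$ is always good; hence every block of $G$ is a bridge or a single cycle, and since $G$ has $\geq 2$ cycles at least two blocks are cycles. (ii) No cycle has length $4$ or $5$: starting on such a cycle $C$, a cut $e\in E(C)$ lies on a cycle hence is no bridge so $G-e$ is connected with a cycle, while a cut $e\notin E(C)$ leaves $C$ intact, so $H_e$ is always good; thus every cycle of $G$ is a triangle. (iii) No two triangles share a vertex (their union would be a figure‑eight: bridgeless, $5$ vertices) and no two triangles are joined by a path of length $\geq 2$ (start the cat at an interior vertex of a shortest such path). (iv) So the triangles are pairwise disjoint and any two are joined by an edge; if there were three, $T_1,T_2,T_3$, concatenating the three joining edges with short paths inside the $T_i$ gives a cycle of length $3$ to $6$ — length $6$ contradicts the preliminary step, lengths $4,5$ contradict (ii), and length $3$ gives a fourth triangle meeting $T_1$, contradicting (iii). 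Hence there are exactly two triangles, joined by a single edge $\beta$, which is a bridge (any cycle through it would be a triangle meeting a $T_i$). (v) Finally, any extra vertices form a nonempty forest hung on this six‑vertex core; letting the cat start in whichever triangle lies on the side of $\beta$ with $\geq 4$ vertices (one side qualifies since $|V(G)|\geq 7$), $H_e$ is good for every $e$ — crucially, cutting $\beta$ no longer isolates a bare triangle. So $G$ is two disjoint triangles plus a bridge and $|V(G)|=6$.

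The step I expect to be delicate is the escape lemma and its interaction with the herder's options, since the whole argument hinges on exactly which small pieces the herder can profitably ``waste'' a cut on: that $\cat(C_3)=2$ but a $4$‑ or $5$‑cycle already has cat number $3$, and that a tail can leave a triangle's tail‑vertex at cat number only $2$ while it never does so on a longer cycle. The remainder is careful bookkeeping, mostly checking that after each kind of first cut the cat can actually reach an escape vertex, which is easy because a cut edge lying on a cycle is never a bridge.
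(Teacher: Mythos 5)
Your proof is correct, but it takes a genuinely different route from the paper's. The paper works directly with the two given cycles in four stages: if they share a vertex, the cat sits at a shared vertex of degree at least $3$ and retains a cycle with a tail after any cut; if they are disjoint and one has length at least $4$, the cat plays there and falls back on the other cycle plus the connecting path; the connecting path is then forced down to a single edge; and finally any extra edge on the six-vertex core is excluded by one more cycle-with-a-tail argument. You instead isolate a single reusable escape lemma --- every connected graph with at least $4$ vertices and a cycle has two vertices of cat number at least $3$, proved exactly in the spirit of \Cref{lem:circTail} by locating a shortest cycle with a tail or chord (or a bare $C_4$ or $C_5$) --- and then run a block-level reduction: no block has cycle rank at least $2$, all cycles are triangles, triangles are pairwise disjoint and pairwise joined by single edges, there are exactly two of them, and nothing hangs off the core. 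The escape lemma is the real gain: it replaces the paper's repeated ad hoc verifications that ``the cat can still reach a cycle with a tail'' by the uniform check that the cat's post-cut component is connected, has at least $4$ vertices, and still contains a cycle, which is easy to verify since an edge lying on a cycle is never a bridge. The cost is a longer chain of structural steps and two places you should make explicit: (a) two triangles cannot be joined by two distinct edges or by a second path, since either creates a cycle of length between $3$ and $6$ that is already excluded by your preliminary step and steps (ii)--(iii); and (b) your template does not actually need the cat's start vertex to lie on a cycle (you apply it at an interior path vertex in step (iii)) --- the only thing used is that every post-cut component containing the cat satisfies the escape lemma's hypotheses. Neither of these is a gap; both follow immediately from what you have already set up.
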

\begin{proof}
    Let $G$ be a connected graph with distinct cycles $C_1,C_2$, and $\cat(G)=3$. Suppose the cycles $C_1,C_2$ meet at some vertex (and possibly an edge). Let $v\in V(C_1)\cap V(C_2)$ such that $\deg(v)\geq 3$ in $C_1\cup C_2$, which must exist since $C_1\neq C_2$. Then the cat's strategy is to play on $v$, and after any cut, the cat has a cycle with a tail, which \Cref{lem:circTail} gives us a score of at least $3$. Adding the initial cut after opening on $v$, the game has a score of $4$ points, a contradiction. 

    Thus cycles $C_1,C_2$ share no vertices (and thus no edges). Let $P_{u,v}$ be a path between $C_1,C_2$, say with $u\in V(C_1),v\in V(C_2)$. If either $C_1,C_2$ has $4$ or more vertices, then without loss of generality assume $C_1$ has at least $4$ vertices, and the cat starts there. After cut $e$ is made, if $e\not\in C_1$, the cat plays in $C_1$ with at least $4$ vertices, and attains at least $3$ more points. Otherwise, $e\in C_1$, and the cat can play in $C_2+P_{u,v}$, which must by \Cref{lem:circTail} score at least $3$ more. This contradicts $\cat(G)=3$. Note, this may not be optimal, but all we need to do is show that the cat can evade capture for at least $4$ steps.

    Thus cycles $C_1,C_2$ are disjoint and both of size $3$. If $P_{u,v}$ contains more vertices than $u,v$, say $w$, then the cat can start on $w$, and after any cut, they can reach a $C_3$ with a tail, scoring at least $3$ more by \Cref{lem:circTail}. Thus $P_{u,v}=uv$. Finally, if any edges outside of $C_1,C_2$, or $uv$ exist, say $cx$, for $c\in V(C_1\cup C_2)$, then the cat may start at $c$. Up to symmetry, $c=u$ or $c\neq u,v$. It is not hard to verify in either case that the cat starting at $c$ can reach a cycle with a tail after any cut, attaining a final score of at least $4$.

    In conclusion, the unique graph with two cycles and cat number $3$ is two $3$-cycles with a bridge.
\end{proof}
\begin{corollary}\label{cat3Classification}
    If $\cat(G)=3$, then $G$ is $2C_3+e$, $G$ is a tree, or $G$ contains exactly one cycle.
\end{corollary}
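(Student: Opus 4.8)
The plan is to obtain this as an immediate case analysis on the cyclic structure of $G$, leaning entirely on \Cref{lem:cat3_2cycles}. Since the cat number is governed by the cat's component (the herder always has an optimal move in the cat's component, as noted earlier), I may assume $G$ is connected; the general statement then follows by applying the connected case to the component on which the cat attains its optimal score.

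First I would split into three cases according to how many distinct cycles $G$ contains. If $G$ is acyclic, then $G$ is a tree, and we are in the second alternative. If $G$ contains a cycle but no two distinct cycles — equivalently, $G$ is unicyclic — then $G$ contains exactly one cycle, the third alternative. Otherwise $G$ contains two distinct cycles $C_1 \neq C_2$; since $\cat(G)=3$ by hypothesis and $G$ is connected, \Cref{lem:cat3_2cycles} applies verbatim and forces $G$ to be two vertex-disjoint triangles joined by a bridge, i.e.\ $G \cong 2C_3+e$, which is the first alternative. In every case $G$ is one of the three listed types.

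The only point requiring a word of care is the bookkeeping between "contains two distinct cycles" (the hypothesis of \Cref{lem:cat3_2cycles}) and "contains more than one cycle": I would observe that a connected graph which is neither a tree nor unicyclic necessarily contains two distinct cycles — for instance, a connected graph with exactly one cycle becomes a spanning tree after deleting a single edge of that cycle, so it has at most $|V(G)|$ edges, and anything with more edges contains two distinct cycles. I expect no genuine obstacle here: all of the real content lives in \Cref{lem:cat3_2cycles}, and this corollary merely repackages its conclusion together with the trivial tree/unicyclic/multicyclic trichotomy.
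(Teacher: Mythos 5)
Your proposal is correct and follows essentially the same route as the paper: a trichotomy on the number of cycles, with \Cref{lem:cat3_2cycles} handling the case of two or more distinct cycles. The extra care you take about connectedness (reducing to the cat's component) is a reasonable small refinement, but the substance is identical.
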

\begin{proof}
    This is immediate from counting cycles in $G$ and applying \Cref{lem:cat3_2cycles} when there are at least $2$ cycles.
\end{proof}
We observe that from the cat numbers on paths and cycles, we obtain cat number $3$ from $P_n$ for $5\leq n\leq 8$ and cycles $C_n$ for $4\leq n\leq 5$, so in fact there are graphs with $0,1,2$ cycles of cat number $3$.

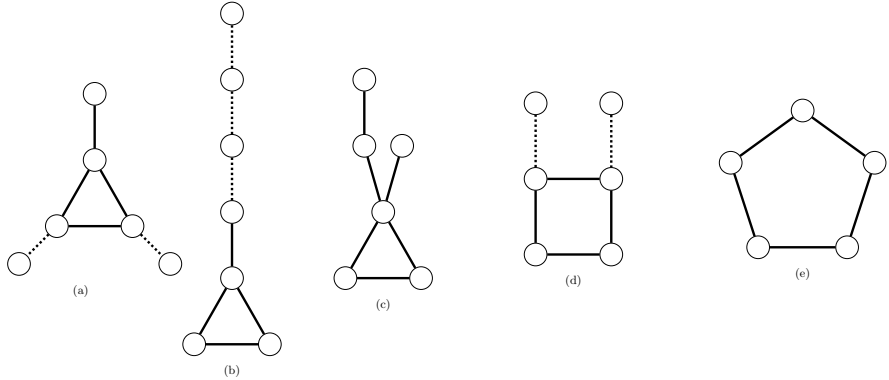
\begin{figure}
 \resizebox{\linewidth}{!}{
    \centering
    \begin{subfigure}[c]{.32\textwidth}
        \centering
        \begin{tikzpicture}
            \node[main node] (a) at (-1,0) {};
            \node[main node] (b) at (1,0) {};
            \node[main node] (c) at (0,1.75) {};
            \node[main node] (d) at (2,-1) {};
            \node[main node] (e) at (-2,-1) {};
            \node[main node] (f) at (0,3.5) {};
            \draw[line width=1.8pt] (f) -- (c) -- (b) -- (a) -- (c);
            \draw[dotted, line width=1.8pt] (b) -- (d);
            \draw[dotted,line width=1.8pt] (a) -- (e);
        \end{tikzpicture}        \caption{}
        \label{fig:cat3cycle3a}
    \end{subfigure}
    \begin{subfigure}[c]{.32\textwidth}
        \centering
        \begin{tikzpicture}
            \node[main node] (a) at (-1,0) {};
            \node[main node] (b) at (1,0) {};
            \node[main node] (c) at (0,1.75) {};
            \node[main node] (d) at (0,3.5) {};
            \node[main node] (e) at (0,5.25) {};
            \node[main node] (f) at (0,7) {};
            \node[main node] (g) at (0,8.75) {};
            \draw[line width=1.8pt] (c) -- (a) -- (b) -- (c) -- (d);
            \draw[dotted, line width=1.8pt] (d)--(e)--(f)--(g);
        \end{tikzpicture} 
        \caption{}
        \label{fig:cat3cycle3b}
    \end{subfigure}
    \begin{subfigure}[c]{.32\textwidth}
        \centering
        \begin{tikzpicture}
           \node[main node] (a) at (-1,0) {};
            \node[main node] (b) at (1,0) {};
            \node[main node] (c) at (0,1.75) {};
            \node[main node] (d) at (-.5,3.5) {};
            \node[main node] (e) at (-.5,5.25) {};
            \node[main node] (f) at (.5,3.5) {};
            \draw[line width=1.8pt] (a)--(b)--(c)--(a);
            \draw[line width=1.8pt] (c)--(d)--(e);
            \draw[line width=1.8pt] (f) -- (c);
        \end{tikzpicture}
        \caption{}
        \label{fig:cat3cycle3c}
    \end{subfigure}
    \begin{subfigure}[c]{.49\textwidth}
        \centering
        \begin{tikzpicture}
            \node[main node] (a) at (-1,-1) {};
            \node[main node] (b) at (-1,1) {};
            \node[main node] (c) at (1,1) {};
            \node[main node] (d) at (1,-1) {};
            \draw[line width=1.8pt] (a)--(b)--(c)--(d)--(a);
            \node[main node] (e) at (1,3) {};
            \node[main node] (f) at (-1,3) {};
            \draw[dotted,line width=1.8pt] (e)--(c);
            \draw[dotted,line width=1.8pt] (b)--(f);
        \end{tikzpicture} 
        \caption{}
        \label{fig:cat3cycle4}
    \end{subfigure}
    \begin{subfigure}[c]{.49\textwidth}
        \centering
        \begin{tikzpicture}
            \foreach\angle in {90,162,234,306,378}{
            \node[main node] (a\angle) at (\angle:2) {};}
            \draw[line width=1.8pt] (a90) -- (a162) -- (a234) -- (a306) -- (a378) -- (a90);
        \end{tikzpicture} 
        \caption{}
        \label{fig:cat3cycle5}
    \end{subfigure}}
    \caption{Types of (pruned) graphs with one cycle and cat number 3. Any dotted edges are optional, provided the resulting edges induce a connected graph. They generally fall into five categories, with (a)-(c) being Triangle-type (a)-(c), (d) being Square-type, and (e) being Pentagon-type.}
    \label{fig:cat3cycle}
\end{figure}
Following \Cref{cat3Classification}, we have a natural way to continue the investigation of cat number 3 graphs, namely, those with cycles, and those that are trees. We claim that all the pruned graphs with cat number $3$ and exactly one cycle appear in \Cref{fig:cat3cycle}.
\begin{theorem}\label{thm:cat_number_3_1_cycle}
    Let $G$ be a pruned graph with exactly one cycle. Then $\cat(G)=3$ if and only if $G$ is isomorphic to $C_3$ with $1$ to $3$ tail leaves, $C_3$ with a single tail path of $1-4$ vertices, $C_3$ with two tail paths of length $1,2$ incident to the same cycle vertex, a square $C_4$ with $0$, $1$, or $2$ non-diametrically opposed leaves, or $C_5$. Namely, $G$ has cat number $3$ if and only if it appears in \Cref{fig:cat3cycle}.
\end{theorem}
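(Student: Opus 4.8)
The plan is to prove the two implications separately: the reverse direction ($\Leftarrow$) is a finite verification, and the forward direction ($\Rightarrow$) is a structural case analysis organized by the length of the unique cycle.

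For $\Leftarrow$, every graph $G$ in \Cref{fig:cat3cycle} is $C_4$, is $C_5$, or contains a cycle with a tail, so $\cat(G)\ge 3$ by \Cref{lem:circTail} or by the known values $\cat(C_4)=\cat(C_5)=3$. For the matching upper bound I would check, for each of the finitely many graphs and each choice of the cat's start vertex $v$, a three-cut herder strategy: the herder's first cut is chosen so that, whatever path the cat then takes, it lands on a vertex of cat number at most $2$ --- typically the herder breaks the cycle and is left with a caterpillar in which the cat can reach nothing better than the second vertex of a $P_4$ or the center of a $P_3$ (both of cat number $2$, using \Cref{thm:cat_numbers_on_paths_ECH} with \Cref{subgraphMonotone}, and the leaf and star lemmas for the trivial pieces), whence $\cat(G,v)\le 1+2=3$.

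For $\Rightarrow$, let $G$ be pruned with a unique cycle $C$ and $\cat(G)=3$. Since $\cat(C_6)=4$, \Cref{subgraphMonotone} forces $|C|\in\{3,4,5\}$. The engine for the lower bounds is a sub-lemma I would isolate first: if the cat has a start $s$ such that for every edge $e$ there is a non-trivial path in $G-e$ from $s$ to some $w\ne s$ with $\cat(G-e,w)\ge 3$, then $\cat(G)\ge 4$ (after the herder cuts $e$ and the cat walks to $w$ it is the herder's turn with remaining value $\cat(G-e,w)\ge 3$). In practice $\cat(G-e,w)\ge 3$ is certified by \Cref{subgraphMonotone} together with one of: $w$ lies on a cycle-with-a-tail subgraph of $G-e$ and is not the tail's attachment vertex (\Cref{lem:circTail}); $w$ is the center vertex of a $P_5$ subgraph of $G-e$ (so $\cat\ge\cat(P_5,\text{center})=3$); or a cycle of length $\ge 4$ survives in $G-e$ and $w$ lies on it. Using this: when $|C|=5$, a cycle vertex at distance $2$ from the attachment of any pendant edge witnesses the hypothesis, so $G=C_5$; when $|C|=4$, any pendant path of length $\ge 2$, or two leaves on diametrically opposite cycle vertices, likewise forces $\cat(G)\ge 4$, while pruning rules out a second leaf on one cycle vertex, leaving exactly $C_4$ with $0$, $1$, or $2$ non-opposite leaves.

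The case $|C|=3$ is the heart of the proof and the step I expect to be the main obstacle. Here $G$ is a triangle with pendant trees $T_x,T_y,T_z$; because $G$ is pruned, no vertex of degree $\ge 3$ has two leaf-children, so each $T_i$ is a path, except that it may carry one extra pendant leaf at its attachment vertex. Placing the cat at a triangle vertex bearing the smallest pendant tree, I would use the sub-lemma to show that unless $G$ is one of the pictured graphs the cat can always answer the herder's first cut by walking to the center of a $P_5$ or to a triangle vertex of a surviving cycle-with-a-tail. The delicate part is pinning the exact thresholds: a single tail may grow to $4$ vertices but not $5$ (one must check that a triangle with a five-vertex tail has cat number $4$, even though its longest path is only $P_8$); a length-$1$ tail and a length-$2$ tail may coexist at one triangle vertex, but not a length-$1$ and a length-$3$ tail, nor two length-$2$ tails; and pendant leaves may sit on distinct triangle vertices (at most one per vertex), but a leaf on one vertex together with a tail of length $\ge 2$ on another vertex forces cat number $4$. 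Each excluded configuration requires a cat strategy and each surviving one a three-cut herder strategy; these must be carried out with attention to whose turn it is, since after a herder cut the cat must vacate its current vertex and so cannot benefit from merely standing on a high-value vertex (for instance, the center of a five-vertex tree consisting of two pendant edges and one pendant path of length $2$ has cat number only $2$). Once the thresholds are fixed, the list in \Cref{fig:cat3cycle} is assembled by bookkeeping, and combining with \Cref{cat3Classification} completes the overall classification of graphs with $\cat(G)=3$.
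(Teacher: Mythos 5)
Your proposal is correct and follows essentially the same route as the paper: split on the cycle length $3,4,5$ via $\cat(C_6)=4$, certify lower bounds by letting the cat reach the center of a surviving $P_5$ or a surviving cycle-with-a-tail after any cut (\Cref{lem:circTail}), give explicit short herder strategies for the upper bounds, and let pruning eliminate redundant leaves; the thresholds you identify for the triangle case (tail of at most $4$ vertices, coexisting tails of lengths $1$ and $2$ only, no length-$\ge 2$ tail together with a pendant on another cycle vertex) are exactly the case distinctions the paper works through. The only cosmetic difference is that you isolate the lower-bound engine as a stated sub-lemma, which the paper instead applies implicitly in each case.
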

\begin{proof}
    Let $G$ have exactly one cycle $C$, have $\cat(G)=3$, and be pruned. If $C$ has $6$ or more vertices, then $\cat(G)\geq 4$, since the cat could just play on the cycle. So we can split based on whether $C$ has $3,4$, or $5$ vertices.
    \begin{enumerate}
        \item Suppose $C$ has $3$ vertices. If $G=C$, then $\cat(G)=\cat(C_3)=2$, a contradiction. Thus there is some other structure, and since $C_3=K_3$, that must be an added edge $cx$. Let $E$ be the edges added to $C_3+cx$ in order to obtain $G$.
        \begin{enumerate}
            \item If all edges in $E$ meet $C$, then after pruning redundant leaves, it will be of type triangle (a). It remains to show that all triangle-type (a) all have cat number 3. We have already shown that a cycle with a tail achieves at least 3 for a lower bound. Let $G$ be the triangle with an edge off of every triangle vertex. If the cat starts on a leaf, they lose in one cut. Otherwise, the cat starts on the cycle, and the herder cuts the edge diametrically opposed to the cat on the cycle. If the cat moves to a leaf, the cat loses in two cuts. Otherwise, the cat moves to one of the cycle vertices incident to the cut edge. The herder then cuts the other incident cycle edge, and the cat must move to the leaf, resulting in the score of $3$. Since all graphs of this type are a subgraph of the graph considered, this suffices.
            \item Suppose alternately that $E$ meets $C$ at at least two vertices, say $xc_1,c_2y$. Because we are not in the first case, $G$ also has edge $yz$ not meeting $\{c_1,c_2\}$. Then the cat starts at $c_3$. If the herder cuts edge $e$ not part of the cycle, at least one tail from the cycle is still intact ($xc_1,c_2y$), and so the cat can score at least $3$ more. Thus the herder cuts in the cycle. The paths $xc_1c_2yz$ and $xc_1c_3c_2yz$ are both paths of at least $5$ vertices that share no cycle edges, so at least one of which must be intact. Thus the cat can move to $c_2$ and score at least $3$ more, as they are in the center of a $P_5$. Thus the cat can force a score of at least $4$, and this case is not possible. 
            \item Now we know that that not all edges in $E$ meet $C$, and $E\cap C=\{x\}$. Further suppose $E$ forms a path ending at $x$, then the graph reaching $C_3$ with a $5$-vertex $x_1\dots x_5$ tail attached with a bridge $cx_1$ leads to the cat starting on $x_1$, and on any cut, the cat either has access to $P_5$ or $C_3+e$, both scoring $3$ recursively. Thus $E$ forming a path has a maximum length as drawn in (b). Note that the $x_1\dots x_4$ path will result in a cat number of $3$ since if the cat starts in the triangle, the herder cuts off the triangle, and scores $1+\cat(K_3)=3$. Otherwise, the cat starts on the path, and the herder cuts off the triangle, and scores $1+\cat(P_4)=3$. Again, the lower bound of $3$ comes from $C_3+e$ having cat number $3$.
            \item  \label{case:broken_p2_duplicate}Lastly, we have all edges $E\cap C=\{x\}$, there are edges that don't meet $C$, and $E$ does not induce a path. If at least $2$ $P_2$s are incident with $x=c_1$, then the cat starts at $c_2$ or $c_3$. After any cut not in the triangle, there must be $C_3+e$ left over, scoring at least $3$ more. Thus any cut to keep score $3$ must be in the triangle. The cat can then move to $x=c_1$, the center of a $P_5$, and again scores at least $3$ more. Thus there can be at most one path of length at least $2$ off of $x$. If there is no path of length at least $2$ off of $x$, then after pruning, we have $C_3+e$ again. So we have exactly one path of length $2$ off of $x$. Since $E$ does not induce a path, there must be other pendants. If the path of length $2$ also branched, it would be pruned. Thus there is an additional pendant on $x$. Multiple leaves will be trimmed again, so without loss of generality, there is exactly one leaf incident to $x$ after pruning. If there is at least $P_3$ and a leaf off of $x$, say $G$ contains cycle $c_1c_2c_3$ with edge $c_1y$, and path $c_1x_1x_2x_3$, all distinct vertices. Then the cat starts on $c_2$ again. After any cut not in the cycle, there is a $C_3+e$ again, and the cat scores at least $3$ more. After a cut in the cycle, $x_3x_2x_1c_1y$ is a $P_5$ with $x_1\neq c_2$ in the center. This again scores at least $3$ more, and we see that triangle-type (c) is the unique case here. To see that triangle-type (c) scores $3$, we note that a triangle with a tail provides a lower bound. For a herder strategy, the herder cuts $c_2c_3$, and then the cat is on $x_1,c_1$, or a leaf. A leaf results in a score of $2$ for the herder, so the other case results in the herder cutting $x_1c_1$ and then the cat must move to a leaf, scoring $3$ total. 
        \end{enumerate}
        \item Suppose $C$ has $4$ vertices. Let $E=E(G)\setminus E(C)$.
        \begin{enumerate}
            \item If $E\cap C=\emptyset$, then $G=C_4$, and this is in our diagram.
            \item If $E\cap C=\{c_1\}$, then we further suppose that $E$ contains an edge not incident to $C$. Then by connectedness $G$ must contain $x_2x_1c_1$ in addition to cycle $c_1c_2c_3c_4c_1$. Then the cat starts anywhere in the cycle not equal to $c_1$, and after any cut out of the cycle, $C_4$ scores $3$ more, and the cat scores at least $4$. On any cut in the cycle, both $x_2x_1c_1c_2c_3$ and $x_2x_1c_1c_4c_3$ are $P_5$ with edges only shared outside the cycle, so at least one must be intact after the cycle cut. The cat moves to $c_1$ and scores at least $3$ more. Thus if $E\cap C=\{c_1\}$, then every edge incident to $c_1$ is an edge to a leaf. After pruning redundant leaves, we conclude without loss of generality that there is exactly one.
            \item Suppose $E$ meets $C$ at two diametrically opposed vertices, say $c_1,c_3$. Then the cat starts at $c_1$, and after a non-cycle cut, the cat scores $\cat(C_4)=3$ more. Otherwise, a cycle edge is cut, and $c_2,c_4$ are centers of $P_5$ with all edges shared not in the cycle. Again, the cat can move to whichever one is intact and score at least $3$ more. Thus $E$ does not meet $C$ at two diametrically opposed vertices.
            \item Suppose $|E\cap C|=2$. Then $E$ meets $C$ at adjacent vertices, say $c_1,c_2$. We have already argued these cannot extend further, lest the cat scores more than $3$. Again, after pruning, we get the result desired.
            \item To show that all of these have cat number $3$, we note that $\cat(C_4)=3$ for a lower bound, and provide a herder strategy to capture the cat in at most $3$ moves on any of these graphs. First, the herder cuts a cycle edge that is not adjacent to any non-cycle edges. After this, at most two (adjacent) vertices may not be leaves. If the cat is on a leaf after their next move, the herder captures them in two moves. Alternately, the herder cuts the edge between the two non-leaves (or makes a passing move if there is only one non-leaf), and the cat must move to a leaf, leaving the herder to capture on the next (third) move.
        \end{enumerate}
        \item Suppose $C$ has $5$ vertices. If any edge $c_1x$ with $c_1\in V(C),c_1x\not\in E(C)$ exists, then the cat may play on $c_3$. Consider the paths of five vertices $xc_1c_2c_3c_4$, $xc_1c_5c_4c_3$, and $c_4c_5c_1c_2c_3$. For any edge deleted, at least one of these $P_5$s will remain intact, with centers $c_1,c_2,c_5$, all of which are legal moves from $c_3$. Thus the cat will again score at least $3$ more. We have already shown $\cat(C_5)=3$.
    \end{enumerate}
\end{proof}
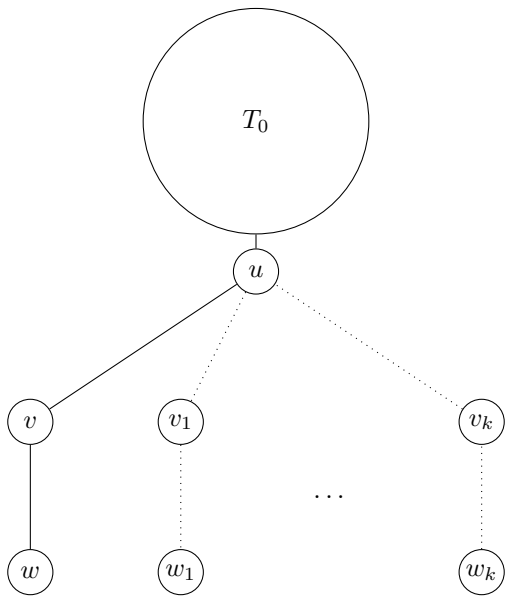
\begin{figure}
    \centering
    \begin{tikzpicture}
        \node[blob node] (r) at (0,3) {$T_0$};
        \node[main node] (u) at (0,1) {$u$};
        \node[main node] (v) at (-3,-1) {$v$};
        \node[main node] (w) at (-3,-3) {$w$};
        \node[main node] (v1) at (-1,-1) {$v_1$};
        \node[main node] (w1) at (-1,-3) {$w_1$};
        \node[] (d) at (1,-2) {$\dots$};
        \node[main node] (vk) at (3,-1) {$v_k$};
        \node[main node] (wk) at (3,-3) {$w_k$};
        \draw (r) -- (u) -- (v) -- (w);
        \draw[dotted] (wk) -- (vk) -- (u) -- (v1) -- (w1);
    \end{tikzpicture}
    \caption{Drawing of $T,T_k$ as in \Cref{lem:treeAddPath2} with edges of $T$ solid and edges of $T_k$ dotted}
    \label{fig:prune2}
\end{figure}

It turns out not only can leaves be duplicated, but also paths of length $2$ can be duplicated on trees when sufficient conditions are met, as we show in \Cref{lem:treeAddPath2}. Conversely, this allows us to prune these structures without changing the cat number. We will discuss the difficulty in generalizing this flavor of result later. 

Given a tree $T$ with named vertices $u,v,w\in V(T)$ such that $uvw$ is a subpath of the tree, and $\deg(u)=\deg(v)=2,\deg(w)=1$, we define a construction $T_k$ as follows. We set $T_k$ to be the tree on vertices $V(T)\cup \{v_i,w_i|1\leq i\leq k\}$ and edges $E(T)$ plus the edges needed to make subpaths $uv_iw_i$ for all $1\leq i\leq k$. 

\begin{lemma}\label{lem:treeAddPath2}
    Suppose $T\ncong P_4$ is a tree with named vertices $u,v,w\in V(T)$ such that $uvw$ is a path with $\deg(T,u)=2$, $\deg(T,v)=2,\deg(T,w)=1$, and let $T_k$ be constructed as above. Then $\cat(T_k,a)=\cat(T,a)$ for all $a\in V(T)$, $\cat(T_k,v_i)=2$, and $\cat(T_k,w_i)=1$. Namely, the cat number does not increase. 
\end{lemma}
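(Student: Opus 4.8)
The plan is to peel off the three claims in increasing order of difficulty. Since $\deg_{T_k}(w_i)=1$, \Cref{lem:leafScores1} gives $\cat(T_k,w_i)=1$ at once. Since $\deg_{T_k}(v_i)=2$, \Cref{lem:leafScores1} gives $\cat(T_k,v_i)\ge 2$, while the herder captures from $v_i$ in two cuts, deleting $uv_i$ (which forces the cat onto $w_i$) and then $v_iw_i$; hence $\cat(T_k,v_i)=2$. Since $T$ is a subgraph of $T_k$, \Cref{subgraphMonotone} gives $\cat(T_k,a)\ge\cat(T,a)$ for every $a\in V(T)$. All that remains is the reverse inequality $\cat(T_k,a)\le\cat(T,a)$ for all $a\in V(T)$.

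For this I would fix an optimal herder strategy $\sigma$ for the game on $T$ started at $a$ and have the herder simulate it on $T_k$; every cut $\sigma$ prescribes is an edge of $T\subseteq T_k$. Let $u'$ be the unique neighbour of $u$ other than $v$ (it exists and differs from $w$ since $\deg_T(u)=2$ and $uvw$ is a path). The crucial computation is the value of the game when the cat sits on $u$. Cutting $uu'$ leaves the cat on the endpoint $u$ of the pendant path $P_3=uvw$, with remaining value $1+\max\{\cat(P_3,v),\cat(P_3,w)\}=1+2=3$; and no herder cut does better unless some single cut turns the cat's component into a star centred at $u$ --- which, because $u$ already carries the pendant $P_3$, is possible only when $T\cong P_4$. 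Hence $\cat(T,u)=3$ for $T\ncong P_4$, realised by cutting $uu'$. The same cut $uu'$ in $T_k$ leaves the cat at the centre of a spider with $k+1$ legs of length $2$, whence the cat's best reply is a dive into some leg $v_i$, worth $2$ more (since $\deg(v_i)=2$ and cutting $uv_i$ makes $\{v_i,w_i\}$ a star); so $\cat(T_k,u)\le 1+2=3=\cat(T,u)$. For $T\cong P_4$ this argument fails, and indeed $\cat(P_4,u)=2$ (the herder can detach the pendant as a $K_2$) while in $T_1$, the spider with legs $1,2,2$, the cat on $u$ forces $3$ --- which is exactly why $P_4$ must be excluded.

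With the computation in hand, the simulation runs like this. The cat can reach a new-leg vertex $v_i$ or $w_i$ with $i\ge1$ only by stepping onto it from $u$, so while the cat stays on $V(T)$ the herder's $\sigma$-cuts are exact. A dive onto any new leg is worth exactly $2$ more to the cat --- the herder answers by deleting that leg's two edges, just as for $uvw$ --- so the cat never gains by using a new leg rather than the original pendant; and once the herder has cut $uu'$ (so the cat is ``loose'' in the pendant-plus-legs region), the herder traps the cat in a single leg, spending the same three cuts from the moment the cat arrived at $u$ as it would in $T$. Thus the number of cuts the herder makes in $T_k$ is at most $\cat(T,a)$.

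I expect the real work to be in making this simulation airtight. One must show that $\sigma$ may be chosen so that its cuts inside the pendant region (deletions of $uv$ or $vw$) are never issued while the cat occupies a vertex from which it could later walk back to $u$ and exploit the extra legs; morally, an optimal herder handling the pendant always cuts ``toward the centre $u$'', because cutting $uu'$ with the cat on $u$ already extracts the optimal three points. This normalisation of $\sigma$, together with the bookkeeping that each of the $k$ new legs contributes exactly $2$ to the cat's score and nothing more, is the delicate part; the rest is \Cref{lem:leafScores1}, \Cref{subgraphMonotone}, and the value computation above.
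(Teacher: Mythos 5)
Your handling of $\cat(T_k,w_i)$, $\cat(T_k,v_i)$, and the monotonicity direction is fine, and your overall plan --- simulate an optimal herder strategy $\sigma$ for $T$ inside $T_k$, with the crux being that the herder never cuts $uv$ or $vw$ while the cat could still exploit the new legs --- is essentially the paper's approach. The problem is that the step you yourself flag as ``the delicate part'' is the entire content of the proof, and the justification you sketch for it does not close the gap. You compute $\cat(T,u)=3$ on the \emph{pristine} graph $T$ and infer that an optimal herder ``always cuts toward the centre $u$.'' Mid-game this can fail: after earlier cuts the far side $T_0$ of $u$ may have been whittled down to the single vertex $u'$, and in the resulting position (cat on $u$, component $u'uvw$) the uniquely optimal herder moves in $T$ are to cut $uv$ or $vw$, worth $2$, whereas cutting $uu'$ costs $3$. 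Faithfully simulating that cut of $uv$ in $T_k$ leaves every edge $uv_i$ intact and hands the cat two extra points. So you must additionally argue either that such positions are never reached along an optimal line of play (the cat can only be \emph{on} $u$ with the herder to move if it chose to walk there, which it will not do when it has better options), or that the herder in $T_k$ can afford to deviate at that moment; neither is established by your value computation.

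The paper's proof is devoted precisely to this normalisation. It shows that if cutting $uv$ with the cat on $u$ were uniquely optimal then capture would have to complete within two more cuts, forcing $T_0$ to be a single vertex; it then concludes that $uv$ and $vw$ remain intact until the final two cuts of the game on $T$, so that at the one step where the cat in $T_k$ might dive to some $v_i$, the subpath $vuv_i$ is still whole --- hence a dive to $v_i$ is no better than a move to $v$, which would already contradict the optimality of $\sigma$ on $T$. An argument of this kind (tracking exactly when $uv$, $vw$, and the $uv_i$ can be cut, as a function of where the cat is) is what your proposal is missing; without it the ``normalisation of $\sigma$'' is an unproven assertion and the proof is incomplete.
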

\begin{proof}
    Let $T,u,v,w,v_i,w_i,T_k$ be as described. Denote the component of $G-u$ not containing $v,w$ as $T_0$, as diagrammed in \Cref{fig:prune2}. By construction, we know that $\deg(w_i)=1$, so $\cat(T,w_i)=1$. Similarly, with the cat starting on $v_i$, the herder cuts $uv_i$ and wins in two moves. Since $T$ is a subgraph of $T_k$, we know that $\cat(T,a)\leq \cat(T_k,a)$ for all $a\in V(T)$. We must show $\cat(T,a)\geq \cat(T_k,a)$. First we show that in a game on $T$, the cat is on $v,w$ if and only if the next herder cut is $uv,vw$, respectively.
    
    Suppose a game of cat herding is played on $T$. Suppose further (for contradiction) that at some point in play, the cat is on $u$, and the unique optimal herder move is to cut $uv$. Note that an alternative for the herder is to cut the edge between $u$ and $T_0$, resulting in a series of moves where the cat moves to $v$, the herder cuts $uv$, the cat moves to $w$, and the herder cuts $vw$, finally giving a score of $3$ more points from the position when the cat is on $u$ and the herder may choose a move. Thus by cutting $uv$ instead, the herder must be capturing the cat within at most two more cuts. This means the cat must move to a leaf, and thus $T_0$ is a single vertex. This violates the assumption that $T\ncong P_4$, and thus if the cat is on $u$, we may without loss of generality assume the herder does not need to cut $uv$. It follows that if the herder cuts $uv$ in a game on $T$, then the cat is on $v$. Conversely, if the cat is ever on $v$, then the herder could cut $uv$ and win in two cuts. If this is not optimal, then cutting $vw$ must be optimal (since $T$ is a tree and monotonicity from \Cref{subgraphMonotone} implies the herder should cut off as much tree as possible), and $uv$ has already been cut. But when $uv$ was cut, the cat must have been on $v$, and must have moved to $w$, contradicting that the cat is on $v$. Thus the cat is on $v$ if and only if the herder's unique optimal move is to cut $uv$ when playing on $T$. We also know that the cat is on $w$ if and only if the herder's unique optimal move is to cut $vw$ since $\cat(T,w)=1$. It follows that the cat will not move to $v,w$ until capture is guaranteed within $2$ more cuts, and thus the edges $uv,vw$ must be intact until the last two cuts.

    Let $a\in V(T)$. To show $\cat(T,a)\geq \cat(T_k,a)$, we provide a herder strategy on $T_k$ when the cat starts at $a$ (in which they perform at least as well as they would on $T$). For the first $\cat(T,a)-2$ moves, either the cat plays in $T$ or in some of the new vertices. If the cat plays in the new vertices, then either the cat is on $v_i$, and the herder cuts $v_iu$, or the cat is on a $w_i$ leaf, and the herder cuts $v_iw_i$. Either of these cases results in capture in at most $\cat(T,a)$ steps. Otherwise, the cat is playing in $T$ for the first $\cat(T,a)-2$ moves, and the herder plays an optimal move on $T$ for each of them. After these $\cat(T,a)-2$ cuts, note that $uv,uv_i,vw,v_iw_i$ are all intact for all $i$. If the cat moves to any $v_i$, then the herder cuts $uv_i$. The cat will move to $w_i$, and the herder will capture the cat in $\cat(T,a)$ moves. If the cat moves to $w_i$, the herder can delete $v_iw_i$ and win in $\cat(T,a)-1$ cuts. Otherwise, the cat is in $T$ still. Again, the herder follows an optimal $\cat(T,a)$ strategy to make the $(\cat(T,a)-1)$th cut. In $T$, then, the cat must move to a leaf, which the herder may cut. The only alternative in $T_k$ is for the cat to move to either $v_i$ or $w_i$. The $w_i$ are also leaves, and so will result in the cat being captured on the next turn. Assuming that the cat can score better in $T_k$, for contradiction, we conclude that the unique cat move must be to move to $v_i$. Could the cat alternately move to $v$, to stay in $T$? We have already argued that the edge $uv$ is not cut in $T$ unless the cat is on $v$, and our herder strategy only prescribes moves in $T$, and not $T'$ up until now. As such, if the cat can move to $v_i$, then the subpath $vuv_i$ is still intact, and the cat could alternately move to $v$. But $v$ is also not a leaf, and the cat could have delayed capture $2$ turns just in $T$, violating that the herder's moves were optimal in $T$, as the herder now takes $\cat(T,a)-1+2=\cat(T,a)+1$ moves to capture the cat in $T$. As such, neither a move to $v$ nor $v_i$ should be possible as this step, and the cat must be forced into moving to a leaf in $T_k$ after $\cat(T,a)-1$ cuts, as needed.
    
\end{proof}

We can invert this process, instead of adding subpaths, we can view this as pruning them. Given a tree $T$, we define $T'$ to be the \emph{pruned tree of $T$} when we identify all systems of paths $tuv_iw_i$ for $0\leq i\leq k$ such that $\deg(t)\geq 2,\deg(u)=k+2,\deg(v_i)=2,\deg(w_i)=1$, and delete the vertices $\{v_i,w_i|1\leq i\leq k\}$ (iteratively).
\begin{corollary}\label{cor:treePrune1}
    If $T$ is a tree, let $T'$ be the pruned tree of $T$. Then $\cat(T)=\cat(T')$ and for all $v\in V(T'), \cat(T,v)=\cat(T',v)$ (the remaining vertices have the same cat numbers).
\end{corollary}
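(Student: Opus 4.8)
The plan is to derive the corollary from \Cref{lem:treeAddPath2} in the same way the earlier leaf-duplication corollary followed from its lemma: show that a single pruning step is an instance of the construction $T''\mapsto (T'')_k$ read backwards, so that it leaves the cat number of every surviving vertex (and of the whole tree) unchanged, and then induct on the number of pruning steps.

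For one step I would fix a system of paths $tuv_iw_i$ ($0\le i\le k$) with $\deg(t)\ge 2$, $\deg(u)=k+2$, $\deg(v_i)=2$, $\deg(w_i)=1$, and let $T''$ be the tree obtained by deleting $\{v_i,w_i:1\le i\le k\}$. First I would check that $T=(T'')_k$ with base tree $T''$ and distinguished path $uv_0w_0$: since $\deg(T,v_i)=2$ and $\deg(T,w_i)=1$ for $i\ge 1$, the only edges of $T$ meeting a deleted vertex are the $uv_i$ and $v_iw_i$, so $E(T)=E(T'')\cup\{uv_i,v_iw_i:1\le i\le k\}$, which is exactly the edge set produced by the $(\cdot)_k$ construction. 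Next I would verify the hypotheses of \Cref{lem:treeAddPath2} for $T''$: after the deletions $u$ has neighbours exactly $t$ and $v_0$, so $\deg(T'',u)=2$, while $\deg(T'',v_0)=2$ and $\deg(T'',w_0)=1$ are inherited from $T$ (in a tree neither $v_0$ nor $w_0$ can be adjacent to a deleted vertex without creating a short cycle). Moreover $T''\ncong P_4$: the deleted vertices are not neighbours of $t$, so $\deg(T'',t)=\deg(T,t)\ge 2$, but a tree on the four vertices $t,u,v_0,w_0$ in which $tuv_0w_0$ is a path would force $\deg t=1$; hence $T''$ has at least five vertices. Therefore \Cref{lem:treeAddPath2} applies and yields $\cat(T,a)=\cat((T'')_k,a)=\cat(T'',a)$ for every $a\in V(T'')$.

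To upgrade this to $\cat(T)=\cat(T'')$, note $\cat(T'')\le\cat(T)$ by \Cref{subgraphMonotone}, while $\cat(T)=\max_{a\in V(T)}\cat(T,a)$; for $a\in V(T'')$ the maximand equals $\cat(T'',a)\le\cat(T'')$, and for $a\in\{v_i,w_i:1\le i\le k\}$ the lemma bounds $\cat(T,a)\le 2=\cat(P_4)\le\cat(T'')$, the last inequality because $tuv_0w_0$ is an induced $P_4$ in $T''$. Iterating: each pruning step deletes $2k\ge 2$ vertices, fixes the cat number of every vertex it does not delete, and fixes $\cat$ of the whole tree, so after the finitely many steps that produce $T'$ we compose the equalities to obtain $\cat(T)=\cat(T')$ and $\cat(T,v)=\cat(T',v)$ for all $v\in V(T')$. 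The whole thing is routine bookkeeping; the only points needing attention are recognising that the pruning hypothesis $\deg(t)\ge 2$ is precisely what excludes the degenerate base case $T''\cong P_4$ barred by \Cref{lem:treeAddPath2}, and the short monotonicity argument turning ``each surviving vertex keeps its cat number'' into ``the tree's cat number is unchanged.''
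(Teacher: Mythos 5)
Your proposal is correct and follows the same route as the paper: recognize a pruning step as the inverse of the $(\cdot)_k$ construction, verify the degree hypotheses and $T''\ncong P_4$ (via $\deg(t)\ge 2$), apply \Cref{lem:treeAddPath2}, and iterate. The paper's version is much terser—it omits the explicit induction and the short maximum/monotonicity argument showing the deleted vertices cannot raise the overall cat number—but those are exactly the details your write-up supplies, not a different idea.
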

\begin{proof}
    Let $T$ be such a tree. Then in $T'$ we have that $tuv_0w_0$ is a path, and $\deg_{T'}(t)\geq 2$. Thus $T'\ncong P_4$ is a tree. After pruning $k$ edges ($uv_i$ for $1\leq i\leq k$) incident to $u$, we have $\deg_{T'}(u)=2$. Further, $\deg_{T'}(v_0)=2,\deg_{T'}(w_0)=1$ by construction. Then $T'_k=T$ and \Cref{lem:treeAddPath2} applies, giving the result.

\end{proof}
\begin{corollary}\label{cor:treePrune2}
    If $T$ is a tree, let $T'$ be the pruned tree of $T$, and then iteratively delete all leaves $l$ that have a corresponding path $wvul$ with $\deg(l)=\deg(w)=1$, $\deg(v)=2$, $\deg(u)=3$ with edge $ut$ and $\deg(t)\geq 2$. Then $\cat(T)=\cat(T')$.
\end{corollary}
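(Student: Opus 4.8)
The plan is to peel off the two operations one at a time. By \Cref{cor:treePrune1} we already have $\cat(T)=\cat(P)$ where $P$ denotes the pruned tree of $T$, so it suffices to show that a single deletion of a leaf $l$ in the stated configuration leaves every cat number unchanged; then $\cat(P)=\cat(T')$ follows by induction on the number of such deletions. Thus I would isolate the claim: if $S$ is a tree with a vertex $u$ of degree $3$ whose neighbours are a leaf $l$, a degree-$2$ vertex $v$ (whose other neighbour $w$ is a leaf), and a vertex $t$ with $\deg(t)\ge 2$, and $S_1=S-l$, then $\cat(S,a)=\cat(S_1,a)$ for every $a\in V(S_1)$. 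Here $S_1$ is a tree containing the path $tuvw$ with $\deg_{S_1}(u)=\deg_{S_1}(v)=2$, $\deg_{S_1}(w)=1$, $\deg_{S_1}(t)\ge 2$, and $|V(S_1)|\ge 5$, so $S_1\ncong P_4$; hence the structural facts established inside the proof of \Cref{lem:treeAddPath2} apply to $S_1$: against an optimal herder, the edge $uv$ (resp.\ $vw$) is cut only on the move immediately after the cat first lands on $v$ (resp.\ $w$), and in particular the cat never moves onto $v$ or $w$ until the herder can finish within two (resp.\ one) more cuts.

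The inequality $\cat(S_1,a)\le\cat(S,a)$ is immediate from \Cref{subgraphMonotone}. For the reverse I would give an explicit herder strategy on $S$: fix an optimal herder strategy $\sigma$ on $S_1$ realising $\cat(S_1,a)$, and on $S$ have the herder follow $\sigma$ as long as the cat sits in $V(S_1)$, but the first time the cat occupies $l$, cut $ul$ and capture. If the cat never visits $l$, its play is a legal $S_1$-play and $\sigma$ finishes in at most $\cat(S_1,a)$ cuts. Otherwise, suppose the cat first lands on $l$ immediately after the herder's $j$th cut. Up to that point the cat has stayed in $V(S_1)$ and all $j$ cuts were $\sigma$-moves, so by the structural facts the cat has not previously occupied $v$ (if it had, $uv$ would already have been cut, after which the cat is forced $v\to w$ and captured before it could reach $l$); likewise it is currently not on $w$. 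Hence $uv$ is still intact, its current vertex lies outside $\{v,w\}$, and the tree-path it used to reach $l$ can therefore be rerouted through $uv$ to reach $v$ instead by a legal (simple, intact) move.

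The crux is then the bound $j+1\le\cat(S_1,a)$. Moving to $l$ ends the game at cut $j+1$; moving to $v$ instead keeps the game entirely inside $S_1$ with the herder still playing $\sigma$, and by the structural facts the herder's next cut is $uv$, forcing the cat to $w$, whereupon the following cut $vw$ captures it --- so this alternative play terminates at exactly cut $j+2$, and since $\sigma$ is optimal on $S_1$ we get $j+2\le\cat(S_1,a)$, hence $j+1\le\cat(S_1,a)$. (One should also check that the cat can make the rerouted move at all, i.e.\ that it is not isolated in the $S_1$-position after $j$ cuts; this holds because that position agrees with the $S$-position except for the missing edge $ul$, and $uv$ is still present.) This proves $\cat(S,a)\le\cat(S_1,a)$, hence equality. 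Finally, since $S\supseteq P_4$ we have $\cat(S)\ge 2>1=\cat(S,l)$, so deleting $l$ leaves the maximum over vertices unchanged, i.e.\ $\cat(S)=\cat(S_1)$; iterating (the relevant configuration keeps $|V(S_1)|\ge 5$, so $S_1\ncong P_4$ throughout) and invoking \Cref{cor:treePrune1} yields $\cat(T)=\cat(P)=\cat(T')$.

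I expect the delicate step to be exactly the escape-through-$l$ case: making precise that the cat gains nothing by ducking into the pendant leaf $l$, which requires certifying both that $uv$ has not yet been cut (so the reroute to $v$ is legal) and that the rerouted continuation lasts precisely two more cuts. This is where the fine control over optimal herder play on a $P_2$-tail from \Cref{lem:treeAddPath2} is indispensable, and it is presumably the "non-trivial" obstruction to pruning leaves that the paper alludes to.
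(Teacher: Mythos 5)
Your argument is essentially correct, but it takes a genuinely different and much more laborious route than the paper. The paper's proof is a two-line sandwich: build $T''$ from $T$ by \emph{extending} each such leaf $l$ to a $P_2$-tail (add a new leaf $v_l$ adjacent to $l$), observe that around $u$ the tree $T''$ now has two $P_2$-tails $uvw$ and $ulv_l$ in exactly the configuration handled by \Cref{cor:treePrune1}, so the pruned tree of $T''$ is $T'$ and hence $\cat(T'')=\cat(T')$; then $T'\subseteq T\subseteq T''$ and \Cref{subgraphMonotone} give $\cat(T')\le\cat(T)\le\cat(T'')=\cat(T')$. This buys brevity and reuses \Cref{lem:treeAddPath2} as a black box. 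You instead re-open the strategy analysis and prove the stronger pointwise statement $\cat(S,a)=\cat(S-l,a)$ directly, via a herder strategy on $S$ that simulates an optimal $\sigma$ on $S_1=S-l$ and handles the cat's escape into $l$ by rerouting it to $v$; the key inequality $j+2\le\cat(S_1,a)$ is obtained correctly because the rerouted play is a legal play against $\sigma$ that survives cut $j+1$. The one point you should tighten is your appeal to the ``structural facts'' of \Cref{lem:treeAddPath2} to certify that $uv$ and $vw$ are intact after $j$ cuts: those facts are stated there for play that is optimal \emph{on both sides}, whereas your cat's restriction to $S_1$ need not be optimal, so an arbitrary optimal $\sigma$ could in principle spend slack cutting $uv$ early. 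You need to fix $\sigma$ to be an optimal strategy that never cuts $uv$ (resp.\ $vw$) unless the cat is on $v$ (resp.\ $w$) --- the without-loss-of-generality argument inside the proof of \Cref{lem:treeAddPath2} shows such a $\sigma$ exists, but this choice should be made explicit. With that repair your proof is sound, and it even yields the pointwise conclusion for all retained vertices, which the paper's corollary does not claim; still, the paper's reduction to \Cref{cor:treePrune1} is the intended and far shorter path.
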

\begin{proof}
    Let $T''$ be constructed by taking the corresponding leaves $l$ and adding a new vertex $v_l$ that is incident only to $l$. Then after pruning $T''$ to $T'$, vertices $v_l,l$ will be deleted, and result in tree $T'$ with $\cat(T')=\cat(T'')$. Thus $\cat(T')\leq \cat(T)\leq \cat(T'')=\cat(T')$ by noting that $T'$ is a subtree of $T$, which is a subtree of $T''$. It follows that $\cat(T)=\cat(T')$. 
\end{proof}
Note that the argument of \Cref{cor:treePrune2} does not work on \Cref{thm:cat_number_3_1_cycle} in case \ref{case:broken_p2_duplicate}, as extending the leaf to a full $P_2$ \emph{does} in fact increase the cat number. This highlights the necessity of the hypothesis in these pruning lemmas, as we use both that the graph is a tree and that the root of the $P_2$ has degree $2$ after pruning. 

If $T=T'$ after the process described in corollaries \cref{cor:treePrune1} and \Cref{cor:treePrune2}, then we call $T$ a \emph{pruned tree}.

\begin{lemma}\label{lem:cat3treepath}
    If $T$ is a pruned tree with $\cat(T)=3$, then $T$ is a path.
\end{lemma}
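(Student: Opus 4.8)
The plan is to prove the contrapositive: if $T$ is a pruned tree that is not a path, then $\cat(T)\ge 4$, so in particular $\cat(T)\ne 3$. This suffices because, together with the classification of pruned graphs of cat number at most $2$ (which are $K_1,K_2,P_3,P_4,C_3$, whose only trees are paths), it shows a pruned tree with $\cat(T)=3$ cannot fail to be a path. The first reduction is the easy one: if $T$ contains a path on $9$ vertices as a subgraph, then $\cat(T)\ge\cat(P_9)=\lceil\log_2 9\rceil=4$ by \Cref{subgraphMonotone} and \Cref{thm:cat_numbers_on_paths_ECH}, and we are done. So from now on assume the longest path in $T$ has at most $8$ vertices (equivalently $T$ has diameter at most $7$); since $T$ is not a path it still has a vertex of degree at least $3$.

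For the remaining case I would describe an explicit cat strategy surviving four cuts. Fix a branch vertex $r$ (degree $\ge 3$) chosen so that at least two of the branches of $T$ at $r$ are themselves paths; such an $r$ exists, e.g.\ by passing to the topological tree obtained by suppressing degree-$2$ vertices and taking a leaf of its core. Start the cat at $r$. The key claim is: for every edge $e$ the herder could delete first, the component $T'$ of $T-e$ containing $r$ contains a vertex $v\ne r$ that is the centre of a $P_5$ in $T'$ --- equivalently, $v$ has two neighbours of degree at least $2$ in $T'$ (in a tree, two such neighbours automatically extend to a $P_5$ with $v$ in the middle). Granting the claim, the cat lets the herder delete $e$ and then moves to $v$; by \Cref{subgraphMonotone} and \Cref{thm:cat_numbers_on_paths_ECH} (the value $\cat(P_5)=3$ being attained at the central vertex) we get $\cat(T',v)\ge 3$, so the cat survives $1+3=4$ cuts, i.e.\ $\cat(T,r)\ge 4$, contradicting $\cat(T)\le 3$.

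Proving the claim is where the pruning hypotheses are used. Deleting one edge lowers $\deg(r)$ by at most $1$ and alters at most one branch at $r$, so $\deg_{T'}(r)\ge 2$ and all but at most one branch at $r$ survive unchanged. Now invoke the structural restrictions a pruned tree imposes on those branches: the leaf-duplication (pruning) corollary forbids $r$ from having two leaf neighbours; \Cref{cor:treePrune1} forbids $r$'s neighbourhood from consisting of one vertex of degree $\ge 2$ together with two or more pendant $P_2$'s (a $P_2$ hanging by a degree-$2$ vertex); and \Cref{cor:treePrune2} forbids a degree-$3$ vertex whose three branches are a leaf, a pendant $P_2$, and a branch rooted at a vertex of degree $\ge 2$. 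I would then go through the cases for $e$ --- $e$ incident to $r$; $e$ deep inside one branch; $e$ at the root of a branch (possibly turning a degree-$2$ neighbour of $r$ into a leaf) --- and in each case exhibit $v$: typically $v$ is the second vertex of a surviving branch of depth $\ge 2$, its second degree-$\ge 2$ neighbour being either $r$ (which retains enough good neighbours) or a vertex deeper in the branch, with a short "double star" argument handling the rare case when $T'$ is very small (as in \Cref{lem:leafScores1} and the cat-number-$2$ analysis).

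The step I expect to be the main obstacle is precisely this case analysis: one must be sure that no legal first cut can simultaneously shorten \emph{all} escape routes from $r$ below a $P_5$-centre, and this needs the full (iterated) force of \Cref{cor:treePrune1} and \Cref{cor:treePrune2} to exclude a branch vertex with too many short branches. A secondary subtlety is the choice of $r$ and of the cat's starting vertex --- it may be necessary to start the cat one step into a long branch rather than exactly at $r$, and to check that the herder cannot then immediately amputate that branch down to a $P_3$ (which is why the bound $\cat(P_5,\text{centre})=3$, rather than merely $\cat(P_3)=2$, is what the escape must deliver).
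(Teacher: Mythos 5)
Your overall plan is sound and in fact parallels the paper's argument: put the cat on a branch vertex, and show that after any single cut the cat can move to the centre of a surviving $P_5$, using the pruning corollaries to rule out branch vertices whose branches are too short. The $P_9$ reduction and the observation that $\cat(P_5,\text{centre})=3$ yields a score of $4$ are both fine. However, there is a genuine gap: the entire technical content of the proof is the claim that \emph{every} possible first cut $e$ leaves, in the component of $r$, a vertex $v\neq r$ that is the centre of a $P_5$, and you explicitly defer this ("I would then go through the cases for $e$\dots"; "the step I expect to be the main obstacle is precisely this case analysis"). As written, nothing rules out a pruned tree whose branch vertex has, say, one long branch and two short ones, where cutting the edge into the long branch strands the cat; you list the pruning corollaries that should forbid this, but you never verify that they cover all branch profiles, and your own hedge about possibly needing to start the cat one step into a branch signals that the strategy is not yet pinned down. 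A proof needs that verification carried out.

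For comparison, the paper closes this exact gap with a trichotomy on the sizes of the three largest components $T_1,T_2,T_3$ of $T-u$ at a vertex $u$ of degree at least $3$. If $|V(T_2)|\geq 3$, prunedness forces both $T_1,T_2$ to have depth at least $3$, giving a path $x_3x_2x_1uy_1y_2y_3$ and an extra edge $uv$; the three paths $x_3x_2x_1uv$, $vuy_1y_2y_3$, $x_3x_2x_1uy_1$ pairwise share no single edge that a lone cut could remove from all of them, so one survives and the cat reaches $x_1$ or $y_1$. If instead $|V(T_2)|\leq 2$, no cat strategy is needed at all: \Cref{cor:treePrune2} (or the leaf-duplication pruning) shows $T$ was not pruned, and the residual case is a star, which cannot have cat number $3$. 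Reorganizing your missing case analysis along these lines --- splitting on branch sizes rather than on the location of the cut $e$ --- is the cleanest way to complete your argument.
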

\begin{proof}
    Let $T$ be a pruned tree with $\cat(T)=3$. Suppose for contradiction that for some $u\in V(T)$, $\deg(u)\geq 3$. Let $T_1,T_2,T_3$ be the three largest trees of $T-u$ with $|V(T_1)|\geq |V(T_2)|\geq |V(T_3)|$. 

    If $|V(T_2)|\geq 3$, then $|V(T_1)|\geq 3$. If either $T_1,T_2$ are only depth $2$ or less (when inheriting a root from rooting the original tree at $u$), then they should have been pruned to $2$ or fewer vertices. Thus $T_1,T_2$ have depth at least $3$, and so $T$ must contain path $x_3x_2x_1uy_1y_2y_3$ where the $x_i\in V(T_1),y_i\in V(T_2)$ and edge $uv$ with $v\in V(T_3)$. If the cat starts on $u$, then after any cut, at least one of  $x_3x_2x_1uv$, $vuy_1y_2y_3$, or $x_3x_2x_1uy_1$ is intact, and a move to either of $x_1,y_1$ are both legal from $u$. Since $\cat(P_5)=3$ when the cat can start in the center, we find that the cat can score at least $4$, contradicting $\cat(T)=3$.

    Thus $|V(T_2)|\leq 2$ and $|V(T_i)|\leq 2$ for all $i\geq 3$. If $|V(T_1)|\geq 2$, then \Cref{cor:treePrune2} applies and $T_i$ with $i\geq 3$ should have been pruned, contradicting that $T$ is a pruned tree. 

    It follows that $|V(T_1)|< 2$, and so all branches have size at most $1$. But this implies $T$ is a star, contradicting $\cat(T)=3$. This completes the contradiction.
\end{proof}
\begin{corollary}\label{cor:cat_number_3_tree}
    If $T$ is a pruned tree with cat number $3$, then $T\cong P_n$ with $5\leq n\leq 8$.
\end{corollary}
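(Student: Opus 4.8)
The plan is to combine \Cref{lem:cat3treepath} with the formula for cat numbers on paths from \Cref{thm:cat_numbers_on_paths_ECH}; no new combinatorial work is needed. First I would invoke \Cref{lem:cat3treepath}: since $T$ is a pruned tree with $\cat(T)=3$, it must be a path, so $T\cong P_n$ for some $n$. Because $\cat(P_n)=3>0$ the path is nontrivial, so $n\geq 2$ and \Cref{thm:cat_numbers_on_paths_ECH} applies, giving $\cat(T)=\lceil\log_2 n\rceil$.

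Next I would solve $\lceil\log_2 n\rceil=3$. This holds exactly when $2<\log_2 n\leq 3$, i.e. when $4<n\leq 8$, so $n\in\{5,6,7,8\}$. Hence $T\cong P_n$ with $5\leq n\leq 8$, which is the claim.

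There is no genuine obstacle in this corollary; all of the substance lives in \Cref{lem:cat3treepath} and, behind it, the pruning machinery of \Cref{cor:treePrune1} and \Cref{cor:treePrune2}. The only thing worth adding for completeness is the (easy) converse: each $P_n$ with $5\leq n\leq 8$ really is a pruned tree of cat number $3$, since every internal vertex of a path has degree $2$, so none of the degree hypotheses needed for leaf-trimming or for removing duplicated $P_2$'s are ever satisfied, whence $P_n=P_n'$ and $\cat(P_n)=3$ by \Cref{thm:cat_numbers_on_paths_ECH}.
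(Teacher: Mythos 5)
Your proposal matches the paper's proof exactly: both invoke \Cref{lem:cat3treepath} to conclude $T$ is a path and then apply \Cref{thm:cat_numbers_on_paths_ECH} to pin down $5\leq n\leq 8$. The only difference is that you explicitly solve $\lceil\log_2 n\rceil=3$ and note the converse, which the paper leaves implicit; this is fine and adds no new substance.
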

\begin{proof}
    If $T$ is a pruned tree, then by \Cref{lem:cat3treepath} implies $T$ is a path. We know that $\cat(P_n)=3$ if and only if $5\leq n\leq 8$ by \Cref{thm:cat_numbers_on_paths_ECH}, completing the argument.
\end{proof}
\begin{theorem}
    Let $G$ be a pruned graph (pruned tree if $G$ is a tree), Then $\cat(G)=3$ if and only if $G$ is $2C_3+e$, $P_n$ with $5\leq n\leq 8$, or $G$ appears in \Cref{fig:cat3cycle}. 
\end{theorem}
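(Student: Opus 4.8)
The plan is to assemble this classification from the structural results already established, since the theorem is essentially a bookkeeping exercise over the three possible ``cycle profiles'' of a cat-number-$3$ graph. I would split into the two directions of the biconditional, and within the forward direction case on the number of cycles in $G$ using \Cref{cat3Classification}.

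For the forward direction, suppose $G$ is a pruned graph (a pruned tree if it is a tree) with $\cat(G)=3$. By \Cref{cat3Classification}, exactly one of the following holds: $G\cong 2C_3+e$; $G$ is a tree; or $G$ has exactly one cycle. In the first case $G$ is already one of the named graphs. In the second case $G$ is by hypothesis a pruned tree, so \Cref{cor:cat_number_3_tree} gives $G\cong P_n$ for some $5\leq n\leq 8$. In the third case $G$ is a pruned graph with exactly one cycle, so \Cref{thm:cat_number_3_1_cycle} places $G$ among the graphs drawn in \Cref{fig:cat3cycle}. This exhausts the forward direction.

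For the reverse direction I would check, for each listed graph, that it is indeed pruned in the appropriate sense and that it has cat number $3$. Paths $P_n$ have no vertex of degree at least $3$, so no pruning operation applies and they are pruned trees; $\cat(P_n)=\lceil\log_2 n\rceil=3$ for $5\leq n\leq 8$ by \Cref{thm:cat_numbers_on_paths_ECH}. The graphs of \Cref{fig:cat3cycle} are pruned by construction, and their cat numbers were shown to equal $3$ inside the proof of \Cref{thm:cat_number_3_1_cycle}. For $2C_3+e$, every vertex has degree $2$ or $3$, so it has no leaves and is trivially pruned; its cat number is $3$ because the triangle together with the bridge edge is a cycle with a tail, so $\cat(2C_3+e)\geq 3$ by \Cref{lem:circTail}, while the herder cutting the bridge leaves the cat inside a $C_3$ component for a total of $1+\cat(C_3)=3$, giving the matching upper bound.

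The content here is almost entirely cited; the only place requiring care --- and the natural obstacle --- is making sure the pruning hypotheses line up across the invoked results: the theorem's phrasing ``pruned graph (pruned tree if $G$ is a tree)'' must be read so that the tree case feeds exactly the hypothesis of \Cref{cor:cat_number_3_tree}, and the one-cycle case feeds exactly the hypothesis of \Cref{thm:cat_number_3_1_cycle}. I would also note explicitly that the three families are pairwise non-overlapping, being distinguished by number of cycles (two, zero, one respectively), so the stated list is irredundant, and record the short direct verification that $\cat(2C_3+e)=3$, since --- unlike the other two families --- this value is not literally the statement of an earlier result.
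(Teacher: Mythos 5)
Your proposal is correct and takes essentially the same route as the paper, whose proof is simply the one-line consolidation of \Cref{cat3Classification}, \Cref{thm:cat_number_3_1_cycle}, and \Cref{cor:cat_number_3_tree}. Your added explicit verification of the reverse direction, in particular the direct check that $\cat(2C_3+e)=3$ via \Cref{lem:circTail} and the bridge-cutting strategy, is a reasonable supplement but not a different approach.
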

\begin{proof}
    This follows by consolidating \Cref{cat3Classification}, \Cref{thm:cat_number_3_1_cycle}, and \Cref{cor:cat_number_3_tree}.
\end{proof}
Running depth-first search (DFS) to identify all leaves and filtering those with parents of degree $2$, then seeking those that share a parent vertex can all be done in linear time. Again, once the linear-time filtering is done, we have a finite set of graphs, and so an enumeration of vertices allows us to discount any large graphs easily. Finally, it takes at most $8!$ vertex permutations to check isomorphism (in linear time) with a fixed graph in our collection. We again find that the complexity is linear in the size of the graph.

Note that pruning is not easy to generalize to larger classes of structures that enable pruning, even in the case of pruning leaves. In particular, let $\mathcal{P}$ be a multiset of positive integers. Then $S(\mathcal{P})$ is a spider with root $r$ and for every $k\in \mathcal{P}$ with multiplicity $d$, $k$ new vertices are introduced as a path to be attached to $r$ at an endpoint, $d$ times. Then $\mathcal{P}=\{2,2,1\}$ and $\mathcal{P}=\{4,4,1\}$ yield spiders which pruning the leaf branch does not affect the cat number, but in the case of $\mathcal{P}=\{3,3,1\}$, the corresponding spider cannot prune the leaf branch without changing the cat number.
\section{Infinite cat herding}
The converse problem of finding the graphs of low cat number may be viewed as finding graphs for which we have infinite cat number. Since $\cat(G)\leq E(G)$, we must then have infinite graphs. Recall that we say that an infinite graph is \emph{cat-win} when the cat has a strategy which never allows it to be captured, and \emph{herder-win} if the herder can eventually capture the cat. We will also denote a vertex as \emph{cat-win} if a cat placed at that vertex may evade capture forever (under optimal play, with the herder cutting an edge next). We begin with a brief analysis of infinite trees. For convenience, we will denote the infinite complete binary tree as $B^{\omega}$. Formally, this is the graph on the vertex set indexed by $\{L,R\}^*$ (that is, the set of all strings on the alphabet $\{L,R\}$, with $\epsilon$ denoting the empty string) with edges between vertices indexed by $\alpha,\alpha t$ for all $\alpha\in\{L,R\}^*,t\in \{L,R\}$.

We first show a lemma that if the cat has a winning strategy, it can't just be in one direction.

\begin{lemma}\label{lem:inf_tree_cat_win_two_dirs}
    Let $T$ be a rooted tree and $v\in V(T)$. If the cat has a winning strategy from vertex $v\in V(T)$ when they are restricted to playing on descendants of $v$, then there must be two rays of descendants of $v$ with infinitely many cat-win vertices.
\end{lemma}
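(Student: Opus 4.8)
The plan is to exploit the cat's given winning strategy $\sigma$ (restricted to $T_v$, the subtree of $T$ induced by $v$ together with all its descendants) by playing it against \emph{two} carefully chosen herder strategies. Each strategy will repeatedly cut the edge just ``behind'' the cat, thereby forcing $\sigma$ to march monotonically away from $v$ and trace out a ray all of whose vertices are cat-win; choosing the two herder strategies so that their first cuts point in different directions will make the two resulting rays diverge already at $v$. The reason a single play is not enough is that one play of $\sigma$ against one herder only produces one trajectory, hence one ray.

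Two preliminary observations will be needed. \emph{(i)} Whenever, during any play of $\sigma$, it is the herder's turn and the cat sits at a vertex $w$, the current graph $G'$ is a subgraph of $T_v$, hence of $T$, and the cat still wins from $w$ on $G'$; so $\cat(G',w)=\infty$, and by \Cref{subgraphMonotone} $\cat(T,w)\ge\cat(G',w)=\infty$, i.e.\ $w$ is cat-win. Thus every vertex the cat ever occupies in a play of $\sigma$ is a cat-win vertex. \emph{(ii)} The vertex $v$ has at least two children in $T_v$: if it had none then $T_v=\{v\}$ and the cat is captured immediately, and if it had exactly one then the herder's first cut isolates $v$; either contradicts the hypothesis that $v$ is cat-win.

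Now fix the herder strategy $H$: on the first move (with the cat at $v$) cut an arbitrary edge of $T_v$ incident to $v$; on each subsequent move, with the cat at some $w\ne v$, cut the edge joining $w$ to its parent in $T_v$. This is always legal, since the cat reached $w$ by traversing its parent-edge and the herder has cut nothing since. Play $\sigma$ against $H$ and list the cat's successive positions $v=z_0,z_1,z_2,\dots$ (an infinite list, as $\sigma$ is winning). For each $i\ge1$, the herder cut made while the cat is at $z_i$ deletes the parent-edge of $z_i$; since $T_v$ is a tree this confines the cat to the subtree $T_v[z_i]$ rooted at $z_i$, so the cat's next (non-trivial) move lands it at a proper descendant $z_{i+1}$ of $z_i$; also $z_1$ is a descendant of $v$. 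Hence $z_0,z_1,z_2,\dots$ is a strictly descending chain, and concatenating the downward tree-paths $z_i\to z_{i+1}$ (each of length $\ge1$) yields a single ray $\rho$ based at $v$ and passing through every $z_i$. By observation (i) each $z_i$ is cat-win, so $\rho$ meets infinitely many cat-win vertices; and since move $1$ deleted one child-edge of $v$, the vertex $z_1$, hence the second vertex $c_\rho$ of $\rho$, lies below some \emph{other} child of $v$.

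For the second ray, run $\sigma$ against the herder strategy $H'$ that agrees with $H$ except that its first cut is the edge $vc_\rho$. By observation (ii), $v$ still has another child after this cut, so $\sigma$ is again forced down into the subtree of some child $c'\ne c_\rho$; from then on $H'$ behaves like $H$, forcing the cat down a ray $\rho'$ of cat-win vertices whose second vertex is $c'$. Since $c'\ne c_\rho$, the rays $\rho$ and $\rho'$ are distinct; deleting the leading vertex $v$ from each gives two distinct rays of descendants of $v$, each passing through infinitely many cat-win vertices, which is the claim. The only steps requiring care are the two preliminary observations and the verification that the concatenation of the paths $z_i\to z_{i+1}$ is genuinely a ray (no vertex repeats, because consecutive segments overlap only in their shared endpoint and later segments live strictly below $z_{i+1}$; it is infinite because each segment has length $\ge1$) — both routine once the monotone-descent picture is in place.
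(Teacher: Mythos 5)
Your proof is correct, and it shares the paper's central device --- the ``chase'' herder strategy of always cutting the edge just behind the cat, which forces any winning cat strategy to march down a single ray whose visited vertices are all cat-win (your observation (i), which the paper leaves implicit) --- but it obtains the \emph{second} ray by a genuinely different route. The paper argues by contradiction: it first extracts one good ray via the chase, then supposes it is the only one, has the herder sever the cat from that ray's tail and chase thereafter, and concludes the herder would win because every remaining ray eventually runs out of cat-win vertices. That step quietly relies on the fact that a non-cat-win vertex is one from which the herder can actually force capture (a determinacy-flavoured contrapositive), and is stated rather loosely. Your version instead plays the cat's fixed winning strategy $\sigma$ against two chase strategies whose first cuts differ --- the second one deliberately cutting $vc_\rho$, the edge toward the first ray --- so that the two plays diverge already at $v$ and each directly exhibits a ray of cat-win vertices. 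This is constructive, avoids any appeal to the herder winning from non-cat-win vertices, and yields the slightly stronger conclusion that the two rays separate at $v$ itself; the only (harmless) extension you use is that \Cref{subgraphMonotone} lifts ``cat-win'' from a subgraph to the ambient graph, which holds by the same strategy-restriction argument as in the finite case. Your observation (ii) is not strictly needed --- the existence of a legal response after the cut $vc_\rho$ already follows from $\sigma$ being winning --- but it does no harm.
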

\begin{proof}
    Note that since the graph is cat-win, there must be some sequence of vertices from which play may continue indefinitely. Then the set of reachable cat-win descendant vertices must contain infinitely many vertices on some ray (consider the herder strategy of always cutting the last edge taken by the cat --- the cat must move along some ray under this strategy). Suppose only one ray (rooted at $v$) contains infinitely many cat-win vertices. Then the cat may (without loss of generality) make some optimal move along this ray, after which the herder can cut just after the cat's location on the ray. The cat may continue with any moves, but the herder now has a winning strategy of cutting the most recently taken edge. When the herder executes this strategy, the cat must be forced along some ray, but all rays by assumption eventually have no cat-win vertices. A herder-win strategy is a contradiction, so there must be at least two rays containing infinitely many cat-win vertices.
\end{proof}

With this lemma, we can show that a tree is cat-win if and only if it contains an infinite complete binary tree minor.

\begin{theorem}\label{thm:infinite_trees}
    A tree $T$ is cat-win if and only if \hspace{.05cm}$T$ \hspace{-.05cm}contains an infinite complete binary tree minor.
\end{theorem}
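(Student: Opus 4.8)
The plan is to show that, for trees, a winning cat strategy and a subdivided copy of $B^\omega$ inside $T$ are essentially the same thing. First a reduction: since $T$ is a tree, $T$ has $B^\omega$ as a minor if and only if $T$ contains a \emph{subdivision} of $B^\omega$ as a subgraph. Indeed, a minor model in a tree is a family of pairwise‑disjoint subtrees $V_x$ ($x\in\{L,R\}^*$) joined by single edges; in each $V_x$ there are at most three attachment points, which have a unique branch point, and routing paths from that branch point to the connecting edges yields an honest subdivision. So throughout we work with subdivided copies: a family of branch vertices $\{b_\alpha:\alpha\in\{L,R\}^*\}$ with $b_\alpha$ joined to $b_{\alpha L}$ and $b_{\alpha R}$ by internally disjoint $T$‑paths, the union forming a subtree of $T$.

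For the ($\Leftarrow$) direction, suppose $T$ contains such a subdivided $B^\omega$ and let the cat start at $b_\epsilon$. The cat maintains the invariant that it sits at a branch vertex $b_\alpha$ with the whole subdivided $B^\omega$ below $b_\alpha$ still intact. When the herder deletes an edge $e$, that edge lies in at most one of the two ``halves'' below $b_\alpha$ (the path $b_\alpha\to b_{\alpha L}$ together with everything below $b_{\alpha L}$, and its $R$‑counterpart), since the halves meet only at $b_\alpha$; the cat moves along the untouched intact half to $b_{\alpha L}$ or $b_{\alpha R}$, restoring the invariant. The cat always moves strictly deeper, so its moves are genuine simple paths and it is always on a vertex of degree at least $2$; hence it evades capture forever and $T$ is cat‑win.

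For the ($\Rightarrow$) direction, root $T$ at a vertex $\rho$ from which the cat wins (with the herder moving first), and call a vertex $v$ \emph{active} if the cat wins from $v$ when confined to the subtree $T_v$ of descendants of $v$. Since every vertex is a descendant of $\rho$, the vertex $\rho$ is active, and every active vertex has at least two children (otherwise the herder isolates it on move one). The construction of a subdivided $B^\omega$ then proceeds by recursion on the claim: \emph{if $v$ is active, there are two active vertices lying in distinct child‑subtrees of $v$}. Writing these as $c_1\in T_{d_1}$ and $c_2\in T_{d_2}$ with $d_1\ne d_2$ children of $v$, we make $c_1,c_2$ the children of $v$ in the binary tree, join $v$ to each by the unique $T$‑path (internally disjoint, since they leave $v$ through different children), and recurse inside the disjoint subtrees $T_{c_1}$ and $T_{c_2}$. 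This never terminates and produces a subdivided $B^\omega$, hence a $B^\omega$ minor.

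It remains to prove the claim, which is the crux of the whole theorem. Fix an active $v$, a winning cat strategy $\sigma$ on $T_v$, and a child $d$ of $v$. Consider the herder play that first deletes $\{v,d\}$ and thereafter always deletes the last edge the cat traversed. Following $\sigma$ the cat survives; its first move is forced into the component of $v$, which is $T_v\setminus T_d$, so it lands at a proper descendant $x\notin T_d$; the herder then deletes the edge from $x$ to its parent, after which the cat is confined to a subtree rooted at $x$ contained in $T_x$, and $\sigma$ still wins — so $x$ is active, because a cat that wins when confined to a subtree still wins after that subtree is enlarged to $T_x$. Doing this for two different children of $v$ gives two active vertices in two distinct child‑subtrees. (\Cref{lem:inf_tree_cat_win_two_dirs} records the same ``two directions'' phenomenon in terms of rays of cat‑win vertices and could be cited instead; the explicit argument above is what closes the recursion.) The main obstacle is precisely this passage: ``cat‑win'' is a global property of $T$, whereas the recursion requires the stronger confined (``active'') notion at every branch vertex, and bridging the two forces one to exploit both the herder's freedom to amputate a child‑subtree and its freedom to cut immediately behind the cat, together with the monotonicity fact that confining the cat never helps it.
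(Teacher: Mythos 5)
Your proof is correct and follows the same overall architecture as the paper's: the forward direction is the identical ``always sit at the root of an intact copy; any cut touches at most one of the two halves below you, so move to the root of the other half'' argument, and the backward direction recursively builds branch vertices by finding two directions in which the cat still wins when confined to descendants, with the herder enforcing confinement by cutting parent edges. The one genuine difference is the key recursive step: the paper invokes \Cref{lem:inf_tree_cat_win_two_dirs} (two rays of descendants each carrying infinitely many cat-win vertices) and picks vertices distinguishing the rays, whereas you prove directly that an active vertex has active vertices in two distinct child-subtrees via the explicit herder play ``delete $\{v,d\}$, then cut immediately behind the cat.'' Your version is more concrete and arguably tighter, since it produces exactly the confined-win (``active'') property that the recursion needs rather than passing through rays; your preliminary reduction from minors to subdivisions is also a harmless addition the paper skips. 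One small repair is needed: ``doing this for two different children of $v$'' does not by itself yield active vertices in \emph{distinct} child-subtrees --- applied to $d_1$ and $d_2$, both resulting active vertices could land in a third subtree $T_{d_3}$. Instead, apply the argument once to get an active $x_1\in T_{d'}$ for some child $d'$, and then apply it a second time to the child $d'$ itself, which forces the second active vertex outside $T_{d'}$. With that adjustment the recursion closes and the proof is complete.
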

\begin{proof}
    ($\Leftarrow$) Supposing there is an infinite complete binary tree minor; we claim that a cat's winning strategy is to always play to the root of an infinite complete binary tree. Such a tree must exist initially by assumption, and after any cut $e$, either $e$ is not in the current infinite complete binary tree, or it is cleanly in the left subtree or the right subtree. The cat moves to the alternate side to a new root of a infinite complete binary tree minor, and inductively can survive forever.
    
    ($\Rightarrow$) Suppose $T$ is cat-win. Let $v_\epsilon$ be an cat-win starting location. We think of this as the root of the tree and vertices below as ancestors. We aim to take a given $v_\alpha$ and find $v_{\alpha L},v_{\alpha R}$ ancestors, so that the set of all $v_\alpha$ form a subgraph that can be contracted to $B^\omega$. For the first move, the herder makes any arbitrary passing move.
    
    Since we may interpret $T$ as rooted at $v_{\epsilon}$, we note that the cat is restricted to play on ancestors of their given vertex. Given the cat on $v_\alpha$ with cat play restricted to ancestors of $v_\alpha$ (we will show this can inductively be guaranteed), and so we may use our \Cref{lem:inf_tree_cat_win_two_dirs}.
    
    By \Cref{lem:inf_tree_cat_win_two_dirs}, there must be two distinct rays with infinitely many cat-win vertices on them among the descendant vertices. Take any two of these rays and take vertices that distinguish the rays, say $v_{\alpha L},v_{\alpha R}$, and assume the cat plays two separate (winning) games, one moving to each of these. The herder then may cut (in each game) the parent edge of $v_{\alpha L},v_{\alpha R}$ (respectively), enforcing the inductive constraint of play on ancestors only. 
    
    This construction applies recursively, giving a family of $v_\alpha$ for every left/right string $\alpha\in \{L,R\}^*$ (the set of words on alphabet $\{L,R\}$). Note that if $\alpha\neq \beta$, we have that $v_\alpha\neq v_\beta$ since $T$ is a tree, so we do in fact have uncountably many rays. These vertices form an infinite complete binary tree that is subdivided to some number of paths, which necessarily must be a subgraph of $T$ which can be contracted to $B^\omega$. Thus $T$ contains a $B^\omega$ minor.
\end{proof}

We observe that this provides a description of when the cat wins on trees. We leave a more detailed analysis specifying the herder's strategy, bounding the cat number of herder-win graphs, and connecting formal logic and ray enumeration for a future paper. Here, we will reduce the general problem of cat-win graphs to that of cat-win trees, thus fully solving which infinite graphs are cat-win.

\begin{definition}
A graph is $k$-edge-connected if and only if any pair of vertices has at least $k$ edge-disjoint finite paths between them. This is equivalent to the statement that any disconnecting set of edges is of size at least $k$.
\end{definition}

The above definition may be contested based on whether one believes in the existence of infinite cycles (and as a consequence infinite paths with endpoints). We do not entertain infinite-distance connectivity here, for the purposes of cat-herding, as it makes sense to allow cats to move arbitrarily fast, but not infinitely fast. This also enables use of Menger's theorem for infinite graphs, as proved in \cite{mengerInfinite}.

\begin{lemma}\label{lem:infConnCatWin}
    If $G$ is infinite and $2$-edge-connected, then $G$ is cat-win.
\end{lemma}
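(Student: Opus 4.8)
The plan is to give the cat an explicit strategy governed by a single invariant: \emph{at the start of every herder turn the cat occupies a vertex $v$ lying in some infinite $2$-edge-connected subgraph $K$ of the current graph}. This holds at the outset with $K = G$ (the cat may open anywhere), and it already precludes capture: $v \in V(K)$ forces $\deg(v) \geq 2$ in the current graph, so after the herder deletes one edge the cat still has an incident edge and hence a legal move.

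For the preservation step, suppose the invariant holds with witness $K$ and the herder deletes $e$. If $e \notin E(K)$, then $K$ survives; $v$ has a neighbour $u \in V(K)$, the cat steps to $u$, and $K$ witnesses the invariant again. If $e \in E(K)$, then $e$ is not a bridge of the $2$-edge-connected graph $K$, so $K - e$ is connected and infinite, and I would invoke the following key lemma: \textbf{if $K$ is infinite and $2$-edge-connected and $e \in E(K)$, then $K - e$ contains an infinite $2$-edge-connected subgraph $K''$}. Granting it, the cat walks along a path of $K - e$ from $v$ to the first vertex it meets in $V(K'')$ (or, if $v$ is already in $K''$, to a neighbour of $v$ inside $K''$); in either case this is a legal non-trivial move ending in $V(K'')$, which restores the invariant. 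Since the invariant persists forever and never permits capture, $G$ is cat-win.

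The content is the key lemma, which I would prove by contradiction using only the ``no edge-cut of size $<2$'' form of $2$-edge-connectivity, so that no appeal to infinite Menger is needed. Suppose $K - e$ has no infinite $2$-edge-connected subgraph. If $K - e$ had only finitely many bridges, deleting all of them would split $K - e$ into finitely many connected bridgeless pieces, one of which is infinite; a connected bridgeless graph on at least two vertices is $2$-edge-connected, so that infinite piece would contradict the assumption. Hence $K - e$ has infinitely many bridges. On the other hand $K$ is bridgeless, so for each bridge $f$ of $K - e$, $f$ is not a bridge of $K$; writing $A_f \sqcup B_f$ for the two components of $(K - e) - f$, this means the edge $e = xy$ must join $A_f$ to $B_f$, and since $f$ is the \emph{only} edge of $K - e$ between $A_f$ and $B_f$, every $x$--$y$ path in $K - e$ uses $f$. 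Fixing one (finite) $x$--$y$ path $P$ in the connected graph $K - e$, every bridge of $K - e$ lies on $P$, so there are only finitely many of them --- a contradiction. Thus a subgraph of the required form exists.

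I expect the main obstacles to be careful bookkeeping rather than anything deep: (i) the elementary but slightly fiddly structural facts that, in an infinite connected graph, deleting all bridges leaves bridgeless pieces and that each bridge's endpoints fall into distinct such pieces, which is what legitimises ``deleting finitely many bridges gives finitely many $2$-edge-connected pieces''; and (ii) the ``non-trivial path'' subtlety in the preservation step, where the cat may have to traverse several now-unsafe vertices before reaching $K''$, so one must confirm a suitable path exists inside $K - e$ (it does, since $K - e$ is connected) and that it constitutes a single legal move.
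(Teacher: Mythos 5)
Your proposal is correct, and the top-level strategy is the same as the paper's: maintain the invariant that the cat sits inside an infinite $2$-edge-connected subgraph, which forces degree at least $2$ and hence precludes capture, and reduce everything to the key claim that deleting one edge from an infinite $2$-edge-connected graph leaves an infinite $2$-edge-connected subgraph. Where you genuinely diverge is in the proof of that key claim. The paper contracts the maximal $2$-edge-connected components of $G-e$ into an auxiliary graph, argues it is an infinite tree whose only possible leaves are the components containing the endpoints of $e$, and then invokes the infinite version of Menger's theorem to cap its maximum degree at $2$, reaching a contradiction because an infinite tree with exactly two leaves and maximum degree $2$ cannot exist. Your argument instead counts bridges of $K-e$ directly: if there are finitely many, deleting them all leaves an infinite connected bridgeless (hence $2$-edge-connected) piece, since any non-bridge lies on a cycle that survives the deletion; if there are infinitely many, each bridge must separate the endpoints $x,y$ of $e$ in $K-e$ (else it would already be a bridge of $K$), so all bridges lie on a single finite $x$--$y$ path, a contradiction. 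This is more elementary --- it needs only the ``no edge-cut of size less than $2$'' formulation and avoids Menger entirely --- at the cost of the small structural verifications you already flag (the surviving pieces really are bridgeless, and the cat's relocating move into the new witness subgraph is a single legal non-trivial path, a point the paper glosses over). Both proofs are sound; yours is arguably cleaner for this particular lemma, while the paper's component-tree machinery is reused in spirit in its Lemma on herder-win graphs.
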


\begin{proof}
    Suppose $G$ is infinite and $2$-edge-connected. The cat's strategy is to start on any vertex. After any cut $e$, the cat selects a subgraph $G'$ that is $2$-edge-connected and contains infinitely many vertices, and voluntarily restricts their play to this graph. Since this graph also satisfies the theorem conditions, this strategy may continue indefinitely. The main question for this strategy is if such a $2$-edge-connected subgraph exists after any edge cut $e$.
    
    For sake of contradiction, suppose $G$ is a $2$-edge-connected infinite graph, and suppose edge $e=ab$ is cut such that $G-e$ contains no $2$-edge-connected subgraph with infinitely many vertices. Observe that $G$ is still connected, as our assumption was that $G$ was $2$-edge-connected initially. Let $G'$ be the auxiliary graph where each vertex is a maximally $2$-edge-connected component of $G-e$. For convenience, we will denote the maximal $2$-edge-connected component of $G-e$ that contains vertex $v$ as $G_v$ (note that $2$-edge-connected components' labels are not unique). In $G'$, we have edges between components $G_u,G_v$ when there is an edge $uv\in E(G)$ with $u\in V(G_u),v\in V(G_v)$. If $|V(G')|$ is finite, this gives a $2$-edge-connected infinite subgraph as one of the components by the pigeonhole principle. Thus $|V(G')|$ is infinite. Further, $G'$ is a tree, since it is not $2$-edge-connected. On deleting edge $e=ab\in E(G)$, which component vertices of $G'$ are leaves? Suppose that $G_c$ is a leaf of $G'$. If $G_a,G_b,G_c$ are all disjoint, then let $e'$ be the edge that disconnects $G_c$. Note then that the corresponding $e'\in E(G)$ must also be a cut-edge, and so $G$ is not two-edge-connected, a contradiction. Thus $G_a,G_b$ are the unique leaves,  It follows that $G'$ has at most $2$ leaves and infinite vertices. 
    
    Assume that $G'$ has a vertex of degree at least $3$, say $G_c$. Let $G_x,G_y,G_z$ be neighbours of $G_c$. Since $G$ is $2$-edge-connected, by Menger's Theorem \cite{mengerInfinite}, we obtain $2$ edge-disjoint $a,c$-paths in $G$. One of these paths took $e$, and thus passed through $G_b$, and exactly one of $G_x,G_y,G_z$. But in $G-ab$, $G'$ is a tree, and so there is exactly one $a,c$-path, passing through at most one of $G_x,G_y,G_z$. But then the last of $G_x,G_y,G_z$, say $G_z$, must have the edge $e'$ between $G_c,G_z$ a cut-edge in $G$. Thus no $G_z$ can exist, and so all vertices of $G'$ have degree at most $2$. It follows that $G'$ is a tree with at most $2$ leaves, maximum degree $2$, and infinite vertices. Since $G'$ is a tree, there is exactly one $G_a,G_b$ path in $G'$. If $G_a$ has degree at least $2$, then one edge is not part of the $G_a,G_b$ path. This edge would also be a cut-edge in $G$ then, and so $G_a,G_b$ are the unique leaves of $G'$. Thus $G'$ is an infinite tree with exactly two leaves and maximum degree $2$. Essentially, we have shown we have found an infinite path, which we assumed does not exist.
    
    Thus such $G,e$ could exist, and recursively, we can always find a `smaller' set of which to play on.
\end{proof}

\begin{lemma}\label{lem:infHerderWin}
    If $G$ has no infinite $2$-edge-connected subgraph and $G$ has no infinite complete binary tree minor, then $G$ is herder-win.
\end{lemma}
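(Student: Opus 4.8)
The plan is to reduce to the tree case of \Cref{thm:infinite_trees} by passing to the bridge decomposition of $G$, and then to lift a herder winning strategy from that tree back to $G$.

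First, two easy reductions. Since the herder may always move in the component containing the cat, and that component is a subgraph of $G$ (hence still has no infinite $2$-edge-connected subgraph and no $B^{\omega}$ minor), we may assume $G$ is connected. Now take the usual bridge decomposition: delete all bridges of $G$ and let the \emph{blocks} $G_{\beta}$ be the connected components of what remains (each block is a single vertex or a $2$-edge-connected graph, hence finite by hypothesis), and let the \emph{bridge tree} $T$ have one vertex $\beta$ per block $G_{\beta}$ and one edge per bridge of $G$. Then $T$ is a tree (a cycle in $T$ would force a bridge of $G$ onto a cycle), and $T$ is obtained from $G$ by contractions, so it is a minor of $G$ and therefore has no $B^{\omega}$ minor. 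By \Cref{thm:infinite_trees}, $T$ is not cat-win, hence (these notions being complementary, the herder's winning condition being open) $T$ is herder-win; fix a herder winning strategy $\sigma$ for the tree game on $T$.

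The heart of the proof is to convert $\sigma$ into a herder strategy on $G$, by running a shadow game on $T$ whose ``virtual cat'' sits at the vertex $\beta$ of the block $G_{\beta}$ currently containing the real cat. The herder acts as follows: if the real cat's previous move crossed from one block to another (or it is the first turn), the herder plays $\sigma$'s prescribed move, which is a bridge of $G$, and cuts it; otherwise the real cat's previous move stayed inside its current block $G_{\beta}$, and the herder instead deletes an edge \emph{inside} $G_{\beta}$, following a fixed finite capture strategy for the (finite) current state of $G_{\beta}$. Two facts make this coherent. A path of $G$ with both endpoints in a block lies entirely inside that block, while a path from $G_{\beta}$ to $G_{\gamma}$ crosses exactly the bridges lying on the unique path from $\beta$ to $\gamma$ in $T$; so each time the real cat changes blocks, the induced virtual move $\beta\to\gamma$ is a \emph{legal} cat move in the shadow game, since those bridges are all intact (the cat just walked across them). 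And the ``inside'' moves delete only non-bridges, hence never alter an edge of $T$; so the bridges the herder actually cuts always constitute a legal play of $\sigma$.

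It remains to check the cat is always captured in finitely many moves. If play crossed blocks infinitely often, the shadow game would be an infinite play consistent with the winning strategy $\sigma$, which is impossible; so after finitely many block-crossings the real cat stays in one block $G_{\beta^{*}}$ forever (or is captured before then), and from that point the cat is effectively playing the finite game on $G_{\beta^{*}}$ against the herder's winning finite strategy there, so it is captured in finitely many further moves --- unless the shadow game ended earlier with the virtual cat isolated at some $\beta^{*}$, which means every bridge incident to $G_{\beta^{*}}$ has been cut, again confining the real cat to the finite graph $G_{\beta^{*}}$ where the herder finishes. In every line of play the cat is captured after finitely many moves, so $G$ is herder-win. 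The step I expect to be the main obstacle is making this simulation airtight: pinning down the interleaving of ``$\sigma$-moves'' and ``inside-$G_{\beta}$ moves'', and verifying that the real cat's genuine moves always project to legal cat moves of the shadow game (the block-by-block behaviour of paths across bridges, and the vacuity of the degenerate case where the cat tries to loiter in a block with no internal edges left). What makes it all work is that every block is finite, so loitering cannot help the cat and all the genuine difficulty is concentrated in the coarse tree structure --- exactly the case already resolved by \Cref{thm:infinite_trees}.
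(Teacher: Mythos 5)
Your proof is correct and follows essentially the same route as the paper's: contract the maximal $2$-edge-connected components (each finite by hypothesis) to obtain a tree with no $B^{\omega}$ minor, invoke \Cref{thm:infinite_trees} to get a herder-win strategy there, and lift it back to $G$ by interleaving it with finite capture play inside each component. Your version of the lifting (the shadow game, the legality of projected cat moves across bridges, and the finiteness of loitering inside a block) is spelled out more carefully than the paper's one-line description of the combined strategy, but it is the same argument.
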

\begin{proof}
    Let $G$ be a graph with no infinite $2$-edge-connected subgraph. Let $G'$ be the graph obtained by contracting all maximal $2$-edge-connected subgraphs to a point. Since there is no infinite $2$-edge-connected subgraph in $G$, each vertex in $G'$ corresponds to finitely many vertices of $G$. Thus if $G'$ is finite, then $G$ is finite, and $G$ is herder-win. Suppose alternately that $G'$ is infinite. Then $G'$ is an infinite tree, which must contain no infinite complete binary tree minor. By \Cref{thm:infinite_trees}, we know that $G'$ is herder-win. We also know that any finite graph is herder-win, as the herder can delete all edges to win. The herder-win strategy on $G$, then, is to start by playing optimally on $G'$, and on any cat move within the same $2$-edge-connected component, the herder responds by cutting any edge in that component, and on any move in $G'$, the herder responds optimally in $G'$. Since both games are herder-win, the herder must eventually win.
\end{proof}

We are now ready to establish a full characterization of the cat-win graphs.

\begin{theorem}
    If $G$ is any graph, then $G$ is cat-win if and only if at least one of the following hold:
    \begin{enumerate}
        \item $G$ has an infinite complete binary tree minor.
        \item $G$ has a $2$-edge-connected subgraph with infinitely many vertices.
    \end{enumerate}
\end{theorem}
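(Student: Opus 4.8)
The plan is to assemble the theorem directly from the three preceding lemmas, so that the only real content is bookkeeping about how the property ``cat-win'' behaves under passing to subgraphs and to minors.

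For the ($\Leftarrow$) direction I would treat the two conditions separately. If $G$ has a $2$-edge-connected subgraph $H$ with infinitely many vertices, then \Cref{lem:infConnCatWin} already gives that $H$ is cat-win; I would then record the (easy) monotonicity observation that a graph containing a cat-win subgraph is itself cat-win: the cat simply plays its winning strategy on $H$ and regards every herder cut of an edge in $E(G)\setminus E(H)$ as a passing move relative to $H$, since such a cut only reduces the herder's future options. If instead $G$ has an infinite complete binary tree minor, I would reuse verbatim the cat strategy from the ($\Leftarrow$) half of \Cref{thm:infinite_trees}: the cat always occupies a branch vertex serving as the root of an infinite complete binary tree minor, and after any cut $e$ the deleted edge either is irrelevant to the minor or lies cleanly in the part contracting to the left subtree or to the right subtree, so an infinite complete binary tree minor rooted on the surviving side persists, and the cat moves to its root along a finite path. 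The point to stress is that this argument never used that the ambient graph was a tree, only that the minor exists, so it applies to arbitrary $G$.

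For the ($\Rightarrow$) direction I would argue the contrapositive. If $G$ has neither an infinite complete binary tree minor nor a $2$-edge-connected subgraph with infinitely many vertices, then \Cref{lem:infHerderWin} states that $G$ is herder-win. It then suffices to note that no graph is simultaneously herder-win and cat-win, since a herder strategy that always eventually isolates the cat immediately contradicts the existence of a cat strategy that evades capture forever. Hence such a $G$ is not cat-win, which is precisely the contrapositive of the desired implication.

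The only genuinely delicate points are the two ``monotonicity'' steps flagged above: that a cat-win subgraph forces the whole graph to be cat-win (classifying herder moves outside the subgraph as passes and checking the cat's $H$-strategy still applies), and that the binary-tree-minor strategy survives the contractions built into the notion of a minor (a cut occurring inside a contracted path, or among the subdivision edges, must be unambiguously assigned to ``left'', ``right'', or ``irrelevant'', and the cat may have to traverse an entire contracted path to reach the next root). Neither is hard, but both deserve an explicit sentence; everything else is a direct invocation of \Cref{lem:infConnCatWin}, \Cref{lem:infHerderWin}, \Cref{thm:infinite_trees}, and (for the subgraph observation) the spirit of \Cref{subgraphMonotone}.
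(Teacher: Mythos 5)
Your proposal follows essentially the same route as the paper, which proves the theorem by directly invoking \Cref{thm:infinite_trees}, \Cref{lem:infConnCatWin}, and the contrapositive of \Cref{lem:infHerderWin}. The only difference is that you explicitly justify the two monotonicity steps (cat-win subgraph implies cat-win graph, and the binary-tree-minor strategy not requiring the ambient graph to be a tree) that the paper leaves implicit; this is a reasonable refinement rather than a different approach.
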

\begin{proof}
    ($\Leftarrow$) If $G$ has an infinite complete binary tree minor, then $G$ is cat-win by \Cref{thm:infinite_trees}, and if $G$ has an infinite $2$-edge-connected subgraph, then $G$ is cat-win by \Cref{lem:infConnCatWin}.
    
    ($\Rightarrow$) This is the contrapositive of \Cref{lem:infHerderWin}.
\end{proof}

\section{Cat-pseudo-win graphs}

Consider the double-sided ray, identifying vertices by $\mathbb{Z}$. Without loss of generality, the cat will play to $0$, the herder will cut edge $\{-1,0\}$, and the cat can move to $n$. The herder cuts $\{n,n+1\}$, and captures the cat in finite time. Note though, that even though the herder can guarantee isolation of the cat, the cat can score arbitrarily high. We consider this to be \emph{cat-pseudo-win}, as the herder cannot guarantee capture in a fixed number of moves. 
\begin{definition}
    If a cat has a strategy that provides a valid response move to the first $k-1$ cuts, then $G$ is \emph{$k$-evadible}. If $G$ is $k$-evadible for all $k\in\mathbb{N}$, then $G$ is \emph{$\omega$-evadible}.
\end{definition} 
To motivate this definition, we note that if the cat makes a response after the first $k-1$ cuts, then the cat can guarantee that the herder makes at least $k$ cuts in the graph. We also note that $G$ is herder-win and $\omega$-evadible if and only if $G$ is cat-pseudo-win.
\begin{lemma}\label{lem:allPaths}
    If for all $k\in\mathbb{N}$, there exists a path of length $k$ vertices ($P_k$) as a subgraph of $G$, then $G$ is $\omega$-evadible.
\end{lemma}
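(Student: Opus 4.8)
The plan is to derive $\omega$-evadibility directly from the formula $\cat(P_n)=\lceil\log_2 n\rceil$ together with subgraph monotonicity. Fix an arbitrary $k\in\mathbb{N}$; since $k$ is arbitrary, it suffices to prove that $G$ is $k$-evadible.

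The first step is to restate $k$-evadibility in terms of the cat number. From the rules, the cat fails to have a valid response to a cut precisely when it has just been captured (is placed on a degree-$0$ vertex); so a strategy that answers the first $k-1$ cuts from a start vertex $v$ is the same thing as a strategy guaranteeing the cat is not captured within $k-1$ cuts, i.e. guaranteeing a score of at least $k$. Thus it is enough to exhibit a vertex $v$ of $G$ with $\cat(G,v)\ge k$, and this inequality is only ever used as a lower bound, so there is no issue if $\cat(G,v)$ happens to be ``infinite''.

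The second step produces such a vertex. Put $n:=2^k$ (any $n$ with $\lceil\log_2 n\rceil\ge k$ would do). By hypothesis $G$ contains $P_n$ as a subgraph, and by \Cref{thm:cat_numbers_on_paths_ECH}, $\cat(P_n)=\lceil\log_2 n\rceil=k$. Since $P_n$ is finite, the maximum defining $\cat(P_n)$ over start vertices is attained, say at $v_0\in V(P_n)$, so $\cat(P_n,v_0)=k$. Now apply the ``further'' clause of \Cref{subgraphMonotone} to the inclusion $P_n\subseteq G$ at the vertex $v_0$: it yields $\cat(G,v_0)\ge\cat(P_n,v_0)=k$. Combining with the first step, the cat has a strategy from $v_0$ answering the first $k-1$ cuts, which is exactly $k$-evadibility; as $k$ was arbitrary, $G$ is $\omega$-evadible.

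I do not anticipate a real obstacle: all of the substance lives in the two cited results (long paths force large cat number, and passing to a subgraph cannot increase cat number), and the remaining work is the bookkeeping identification of ``answering $k-1$ cuts'' with ``$\cat\ge k$,'' which is immediate from the definition of capture. The only mild subtlety worth spelling out in the final write-up is that \Cref{subgraphMonotone} must be invoked in its vertex-specific form, so that the cat's guaranteed score transfers from the chosen start vertex of the embedded path rather than merely from $\cat(P_n)$ viewed as a graph invariant.
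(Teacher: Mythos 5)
Your proposal is correct and is essentially the paper's own argument: the paper's one-line proof has the cat play on a $P_{2^k}$ subgraph and invokes $\cat(P_{2^k})=\lceil\log_2 2^k\rceil=k$, implicitly using subgraph monotonicity, while you simply make the appeal to the vertex-specific form of \Cref{subgraphMonotone} and the identification of ``answering $k-1$ cuts'' with ``$\cat(G,v)\ge k$'' explicit. No substantive difference.
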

\begin{proof}
    If the cat is challenged to evade capture for $k$ turns, they play on a path of length $2^k$ ($P_{2^k}$), scoring at least $\lceil\log_22^k\rceil=\lceil k\rceil=k$.    
\end{proof}

\begin{lemma}\label{lem:allCycles}
    If for all $k\in \mathbb{N}$, there exists a vertex $v_k\in V(G)$ with $k$ edge-disjoint cycles containing $v_k$ in $G$, then $G$ is $\omega$-evadible.
\end{lemma}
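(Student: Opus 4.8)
The plan is to have the cat anchor itself at the vertex $v_k$ for a suitably large $k$ and use the edge-disjoint cycles through $v_k$ as an inexhaustible supply of short escape routes. Fix an arbitrary $m\in\mathbb{N}$; it suffices to exhibit a cat strategy that provides a valid response to the first $m-1$ cuts. Apply the hypothesis with $k=m$ to get a vertex $v:=v_m$ together with $m$ pairwise edge-disjoint cycles $C_1,\dots,C_m$, all passing through $v$. The cat opens the game on $v$.

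The key bookkeeping observation is that, since the $C_i$ are edge-disjoint, any single edge cut lies on at most one of them; hence after any $t$ cuts at least $m-t$ of the cycles $C_i$ are still entirely intact. Throughout the relevant play $t\le m-1$, so at every such moment at least one cycle through $v$ survives completely, and in particular $\deg(v)\ge 2$ at all times, so the cat is never captured while sitting on $v$.

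The cat's strategy alternates two kinds of responses. Whenever it is the cat's turn and the cat is on $v$ (this happens in response to cuts $1,3,5,\dots$): by the observation there is a fully intact cycle $C$ through $v$, and the cat steps one edge along $C$ to a neighbour $w$ of $v$ on $C$ — a non-trivial move, as cycles have length at least $3$. Whenever it is the cat's turn and the cat is on such a $w$ (in response to cuts $2,4,6,\dots$): the cycle $C$ was fully intact when the cat stepped onto it one cut earlier, so at most the single most recent cut can have removed an edge of $C$; thus at least one of the two $w$–$v$ arcs of $C$ (the direct edge $wv$, or the complementary arc of length at least $2$) is intact, and the cat returns to $v$ along it — again a non-trivial move, and $w$ had degree at least $1$ so the cat was not captured there. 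Iterating, the cat answers cut $1$ (step out), cut $2$ (step back), cut $3$ (step out), and so on, never captured, for as long as the ``step out'' moves can be made; by the observation with $k=m$ this is guaranteed through cut $m-1$. Hence $G$ is $m$-evadible, and since $m$ was arbitrary, $G$ is $\omega$-evadible.

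The only point needing care is the interleaving in the previous paragraph: one checks that whenever a ``step out'' is demanded (in response to an odd-numbered cut $t\le m-1$) a fully intact cycle is available, which is exactly the inequality $m-t\ge 1$ for $k=m$; and whenever a ``step back'' is demanded the cycle currently in use has lost at most one edge, which holds because it was selected intact precisely one cut earlier. A minor side remark is the degree-at-least-$3$ convention for cycles, needed so that ``step out'' and ``step back'' are genuinely non-trivial paths; if one wished to allow multigraph $2$-cycles the same argument would go through with trivial modifications. I do not anticipate any genuine obstacle here: unlike the later discussion of herding ordinals, this lemma is a direct strategy construction.
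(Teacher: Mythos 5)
Your proof is correct and follows essentially the same strategy as the paper's: anchor the cat at $v_k$, step out onto a still-intact edge-disjoint cycle, and return to $v_k$ along the surviving arc, with the count of intact cycles dropping by at most one per cut. You are somewhat more explicit than the paper in verifying that the return move is always available, but the argument is the same.
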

\begin{proof}
    Suppose the cat is challenged to evade capture for $k$ turns. The cat's strategy is to choose vertex $v_k$ to start on, and always move to $v_k$ if it is legal. If the cat is already on $v_k$, they choose any cycle that is still intact that passes through $v_k$ and move to any vertex on that cycle (guaranteeing a return to $v_k$ on the next turn). How long may this strategy guarantee the cat's survival? After $i<k$ edge deletions, at least $k-i$ cycles still pass through $v_k$, and so the cat on $v_k$ may find another move. Thus this strategy guarantees survival for at least $k$ edge deletions.
\end{proof}

To show that a graph is not $\omega$-evadible, we establish that when the structures that make a graph $\omega$-evadible don't occur, it allows us to find an upper bound on the cat number.

\begin{theorem}\label{thm:boundedVictory}
    If there exists a $k$ such that no $P_k$ exists in $G$ and every vertex $v$ has fewer than $k$ edge-disjoint cycles through it, then $\cat(G)\leq k^3-2k^2+3k-2$.
\end{theorem}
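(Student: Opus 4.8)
The plan is to bound the length of the game by controlling two quantities: how far the cat can travel from its starting vertex, and how many "useful redundancies" (paths or cycles) remain available after a batch of herder cuts. Since no $P_k$ exists, the graph $G$ has bounded diameter in each component — in fact every connected induced subgraph has at most $k-1$ vertices on any path, so by a standard argument the number of vertices reachable from any fixed vertex is bounded in terms of $k$ (a BFS tree of depth $\le k-1$ with branching controlled by the absence of long paths). First I would make this precise: show that each connected component of $G$ has at most some explicit function $f(k)$ vertices. The cleanest route is: a connected graph with no $P_k$ has a spanning tree of depth $< k$; combined with the cycle-rank bound below, the total edge count per component is $O(k^3)$, which is where the cubic in the statement originates.

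Next I would set up the herder strategy and the potential/measure that decreases. The herder should always respond in the cat's component (legitimate by the result of \cite{ashmore2024cutscatscompletegraphs} quoted in the introduction). The key structural fact is that a connected graph in which every vertex lies on fewer than $k$ edge-disjoint cycles has cycle rank (first Betti number) bounded — roughly $|E| - |V| + 1 < $ something like $k^2$, because a large cycle rank would force many edge-disjoint cycles through a common vertex via an averaging/Menger-type argument. So each component has $|V| \le k-1$ along any path, bounded degree-free but bounded edge count $|E| \le |V| + O(k^2)$. Counting more carefully, a $P_k$-free connected graph has $|V|$ bounded (a DFS tree of height $< k$ whose every root-to-leaf path has $< k$ vertices, but without a degree bound this needs the cycle condition too — actually $P_k$-freeness alone does not bound $|V|$; the star is $P_4$-free with unbounded size, so one must use that leaves are quickly captured). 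This is the subtle point: I would argue that although $|V|$ may be large, the herder captures the cat in at most $|E(H)|$ moves on any finite subgraph $H$ that the cat is confined to, and the cat is confined to a subgraph with few independent cycles, hence few edges \emph{up to pendant trees}, and pendant trees of bounded path-length have bounded depth so contribute $O(k)$ each.

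The main obstacle will be getting the exact constant $k^3 - 2k^2 + 3k - 2$ rather than just "$O(k^3)$": this requires a careful, essentially tight accounting of (i) the number of edges the herder must cut to dismantle the at-most-$(k-1)^2$-ish cycles (each cycle costs $\approx \lceil \log k\rceil$... no — actually each independent cycle, once isolated, costs at most a $\cat(C_m)$ term, but $m < k$ so $\cat(C_m) \le \lceil \log_2 k\rceil + 1$), plus (ii) the cost of the residual tree, which by \Cref{thm:cat_numbers_on_paths_ECH} and \Cref{cor:cat_number_3_tree}-style reasoning is at most $\lceil \log_2 k\rceil$ per branch. So I would expect the honest bound to be more like $O(k^2 \log k)$, and reconciling that with the stated polynomial $k^3 - 2k^2 + 3k - 2$ — i.e., figuring out which crude over-counting the authors deliberately used to get a clean polynomial — is the real work. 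My plan would be to prove the clean statement by the crudest valid estimates: bound $|E|$ of the cat's reachable subgraph by $(k-1) + (k-1) \cdot \big((k-1)(k-2)\big)$ or similar (vertices times worst-case extra edges per vertex from the cycle bound), check it telescopes to $k^3 - 2k^2 + 3k - 2$, and invoke $\cat(G) \le |E(G)|$ componentwise. I would present the cycle-rank bound as the one genuine lemma needing proof (via Menger), and treat the rest as arithmetic.
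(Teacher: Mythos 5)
Your approach --- bound the number of edges of the subgraph available to the cat and invoke $\cat(G)\le|E(G)|$ --- has a genuine gap: the key cycle-rank lemma you propose is false, and the hypotheses do not confine the cat to a finite (or even finite-cycle-rank) subgraph. Consider the graph with a center $c$, infinitely many neighbours $\ell_1,\ell_2,\dots$ of $c$, and a triangle $\ell_ia_ib_i$ hung on each $\ell_i$. This graph contains no $P_8$, and every vertex lies on at most one cycle (so certainly fewer than $8$ edge-disjoint cycles), yet it has infinitely many edges and infinite cycle rank --- one independent cycle per triangle. So neither ``cycle rank $<k^2$'' nor ``$\cat\le|E|$ of the reachable subgraph'' is available, and your hedge that the cat is ``confined to a subgraph with few independent cycles, hence few edges up to pendant trees'' is exactly what fails: the pendant pieces themselves carry unboundedly many independent cycles. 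You correctly flagged that $P_k$-freeness alone does not bound $|V|$ (the star), but the same obstruction survives the addition of the cycle hypothesis, and no accounting of $|E|$ will recover the theorem.

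The paper's proof avoids edge-counting entirely and is a direct herder strategy. While the cat is anchored at a vertex $v_i$, the herder cuts only the edges lying on cycles through $v_i$ --- at most $k(k-1)$ of them, since a maximal family of edge-disjoint cycles through $v_i$ has fewer than $k$ members, each of length less than $k$ --- then makes one passing move and one further cut of the edge $v_iv_{i+1}$ along which the cat departs, after which the cat can never return to $v_i$ (a surviving return route would close an uncut cycle through $v_i$). Each phase costs at most $k^2-k+2$ cuts, and the anchors $v_0,\dots,v_i$ form a path, so the no-$P_k$ hypothesis caps the number of phases at $k-1$; the constant arises as $(k-1)(k^2-k+2)=k^3-2k^2+3k-2$, not from any telescoping edge count, and your guess that the ``honest'' bound involves $\log k$ factors is beside the point since the paper never isolates cycles to pay $\cat(C_m)$ for them. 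The idea your proposal is missing is precisely this shift of viewpoint: the herder need never dismantle the whole graph, only the $O(k^2)$ edges relevant to the cat's current position, with the path hypothesis bounding how many times the cat can relocate.
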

\begin{proof}
    Assume for contradiction that the cat can evade capture for $k^3-2k^2+3k-2$ moves.
    
    Let the cat start at $v_0$. Suppose the cat is currently on $v_i$. The herder's strategy is to cut off all edges that are part of a cycle that passes through $v_i$. This will take at most $k(k-1)$ turns as there are fewer than $k$ edge-disjoint cycles through $v_i$, each of which has length at most $k-1$ (else there is a $P_k$). If the cat is then on $v_i$, then the herder makes any passing move, after which the cat makes a move off of $v_i$. Let $v_{i+1}$ be the first vertex that the cat moves through (possibly to) after their last visit to $v_i$. The herder's next move is to cut $v_iv_{i+1}$. At this point, the cat must not be able to reach $v_i$, else another cycle through $v_i$ must not have been cut.

    It follows that when the cat is on $v_i$, within $k^2-k+2$ cuts, the herder can force the cat to never return to $v_i$. As this play continues, the cat will create a path $v_0,\dots,v_i$. At this point, the herder has taken $(i+1)(k^2-k+2)$ cuts to force the cat off of $v_i$ forever after. For $i=k-2$, we have that the cat cannot be on any of $v_0,\dots,v_{k-2}$, and so the cat must move to $v_{k-1}$, creating a $P_k$, which must not exist. Thus the cat must be captured on $v_{k-2}$. This collectively takes at most $(k-1)(k^2-k+2)=k^3-2k^2+3k-2$ cuts.
\end{proof}

\begin{corollary}
    If the conditions of \Cref{lem:allCycles} and \Cref{lem:allPaths} are both false, then $G$ is not $\omega$-evadible.
\end{corollary}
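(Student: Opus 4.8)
The plan is to negate both hypotheses, merge the two resulting thresholds into a single integer, and then invoke \Cref{thm:boundedVictory} to bound the cat number; a finite cat number will directly contradict $\omega$-evadibility.

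First I would unpack the two negations. Since the hypothesis of \Cref{lem:allPaths} fails, there is some $k_1\in\mathbb N$ for which $G$ contains no $P_{k_1}$ subgraph; since the hypothesis of \Cref{lem:allCycles} fails, there is some $k_2\in\mathbb N$ for which no vertex of $G$ has $k_2$ edge-disjoint cycles through it. Set $k=\max\{k_1,k_2\}$. The next step is a short monotonicity observation in $k$: every $P_k$ contains a $P_{k_1}$ as a subgraph, so $G$ has no $P_k$; and if some vertex had $k\ge k_2$ edge-disjoint cycles through it, it would in particular have $k_2$ such cycles, contradicting the choice of $k_2$, so every vertex of $G$ has fewer than $k$ edge-disjoint cycles through it. Hence $G$ satisfies the hypotheses of \Cref{thm:boundedVictory} for this single $k$, giving $\cat(G)\le k^3-2k^2+3k-2$, a finite number.

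It then remains to argue that a graph of finite cat number cannot be $\omega$-evadible. For this I would note that if $G$ is $m$-evadible then the cat has a strategy guaranteeing a valid (nontrivial-path) response to each of the first $m-1$ cuts, and that after any valid cat response the cat sits on a vertex of positive degree (the final edge of the path it just traversed is still present), so the herder is forced to make an $m$-th cut; thus the score is at least $m$, i.e.\ $\cat(G)\ge m$. Taking $m=\cat(G)+1$ shows $G$ is not $(\cat(G)+1)$-evadible, hence not $\omega$-evadible. Equivalently, $\omega$-evadible graphs have infinite cat number, and we have just bounded $\cat(G)$.

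I do not anticipate a genuine obstacle: \Cref{thm:boundedVictory} carries all the weight. The only places that deserve a sentence of care are the two monotonicity claims used to collapse $k_1,k_2$ into one $k$ (necessary because \Cref{thm:boundedVictory} requires both forbidden structures at the \emph{same} threshold), and the precise bookkeeping, via the definition of $k$-evadible, that turns a finite cat number into a failure of $\omega$-evadibility.
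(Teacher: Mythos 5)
Your proposal is correct and follows the paper's proof exactly: negate both hypotheses, set $k=\max(k_1,k_2)$, and invoke \Cref{thm:boundedVictory} to get a finite bound on $\cat(G)$. The only difference is that you spell out the monotonicity of the two thresholds and the step from finite cat number to failure of $\omega$-evadibility, both of which the paper leaves implicit.
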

\begin{proof}
    Assume that there is a $k_P$ such that there are no paths of length $k_P$ and a $k_C$ such that no vertex has $k_C$ disjoint cycles through it. Then the cat number is finite and bounded by  \Cref{thm:boundedVictory} with $k=\max(k_P,k_C)$.
\end{proof}

The following theorems are now self-evident.
\begin{theorem}
    $G$ is $\omega$-evadible if and only if one of the following is true
    \begin{enumerate}
        \item For all $k$, there exists a path on $k$ vertices ($P_k$) as a subgraph of $G$.
        \item For all $k$, there exists $v_k\in V(G)$ such that there exist $k$ edge-disjoint cycles, each containing $v_k$.
    \end{enumerate}
    Graphs which are $\omega$-evadible and herder-win are exactly those which are cat-pseudo-win.
\end{theorem}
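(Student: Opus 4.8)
The plan is to present this theorem as a consolidation result, in the same spirit as the earlier ``putting it together'' theorems for cat numbers $2$ and $3$: essentially everything needed has already been proved, and the task is to assemble it and check that the quantifiers line up. For the ``if'' direction I would split into the two stated cases. If condition~(1) holds, then for every challenge length $k$ the graph contains $P_k$ as a subgraph, which is exactly the hypothesis of \Cref{lem:allPaths}, so $G$ is $\omega$-evadible. If condition~(2) holds, then for every $k$ there is a vertex $v_k$ with $k$ edge-disjoint cycles through it, which is exactly the hypothesis of \Cref{lem:allCycles}, so again $G$ is $\omega$-evadible. Hence either condition suffices.

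For the ``only if'' direction I would argue the contrapositive. Suppose neither~(1) nor~(2) holds. Failure of~(1) means there is some $k_P$ with no $P_{k_P}$ subgraph; failure of~(2) means there is some $k_C$ such that no vertex carries $k_C$ edge-disjoint cycles. Setting $k=\max(k_P,k_C)$, the hypotheses of \Cref{thm:boundedVictory} are met (absence of $P_k$ is monotone in $k$, and ``fewer than $k$ edge-disjoint cycles through every vertex'' follows from the $k_C$ bound), so $\cat(G)\le k^3-2k^2+3k-2$ is finite; in particular the cat cannot answer the first $k^3-2k^2+3k-1$ cuts, so $G$ is not $\omega$-evadible. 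This is precisely the corollary immediately preceding the theorem, so I would simply cite it. Finally, the last sentence of the theorem is immediate from the observation recorded just after the definition of $\omega$-evadible, namely that $G$ is herder-win and $\omega$-evadible if and only if $G$ is cat-pseudo-win; combining this with the characterization just established gives the stated description of cat-pseudo-win graphs.

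The genuine content, and the only place any real obstacle lives, is \Cref{thm:boundedVictory} itself rather than this assembly step: there one must show the herder can ``evict'' the cat from each visited vertex $v_i$ within a bounded number of cuts (clearing at most $k-1$ cycles of length at most $k-1$, then severing the edge the cat first leaves along), and that after $k-2$ such evictions the cat is forced onto a newly created vertex $v_{k-1}$, producing a forbidden $P_k$. Assuming that theorem and its corollary, the remaining work here is purely bookkeeping: I would just double-check the direction of the quantifier (``for all $k$'' in the two conditions versus ``there exists $k$'' in the finiteness hypotheses) so that the negations are stated correctly, and note that no other cases can arise because the corollary covers exactly the situation where both~(1) and~(2) fail.
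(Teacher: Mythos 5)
Your proposal is correct and matches the paper's intent exactly: the paper declares this theorem ``self-evident'' from \Cref{lem:allPaths}, \Cref{lem:allCycles}, the corollary to \Cref{thm:boundedVictory}, and the remark following the definition of $\omega$-evadible, and your assembly is precisely that argument. No gaps.
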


Suppose that $G$ is not $\omega$-evadible. The idea for the following theorem comes from Dietmar Berwanger. We claim most graphs with finite cat number are in fact finite graphs, in the following sense.

\begin{theorem}
    If $G$ is a connected graph and not $\omega$-evadible, then $G$ is finite or $G$ contains an infinite degree vertex.
\end{theorem}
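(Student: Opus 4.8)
The plan is to prove the contrapositive via a standard breadth-first layering argument. Suppose $G$ is connected, infinite, and has no vertex of infinite degree (equivalently, $G$ is locally finite); I will show that $G$ is $\omega$-evadible, which contradicts the hypothesis. Recall that, by the corollary following \Cref{thm:boundedVictory}, a graph which is not $\omega$-evadible fails both conditions of \Cref{lem:allPaths} and \Cref{lem:allCycles}; in particular it does not contain $P_k$ for arbitrarily large $k$. So it suffices to produce, for every $k$, a subgraph isomorphic to $P_k$, since this alone triggers \Cref{lem:allPaths} and forces $\omega$-evadibility.

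First I would fix a basepoint $v_0 \in V(G)$ and consider the balls $B_n = \{v : d(v_0,v) \le n\}$. Since $G$ is locally finite, each $B_n$ is finite, by induction on $n$: $B_0 = \{v_0\}$, and $B_{n+1} \subseteq B_n \cup \bigcup_{v \in B_n} N(v)$ is a finite union of finite sets. Because $G$ is connected and infinite, $\bigcup_n B_n = V(G)$ is infinite, so the chain $B_0 \subseteq B_1 \subseteq \cdots$ cannot stabilize; hence for every $n$ there is a vertex $w$ with $d(v_0,w) > n$. A shortest $v_0$--$w$ path is simple and uses more than $n$ edges, so it is a subgraph isomorphic to $P_m$ for some $m > n+1$, and restricting to an initial subpath gives a copy of $P_{n+1}$. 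Thus $G$ contains $P_k$ for every $k$, so by \Cref{lem:allPaths} $G$ is $\omega$-evadible, the desired contradiction. Therefore any connected $G$ that is not $\omega$-evadible is finite or contains a vertex of infinite degree.

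I do not expect a genuine obstacle here; the only points requiring care are bookkeeping ones. One must note that ``$G$ has no infinite-degree vertex'' is exactly ``$G$ is locally finite,'' and that from ``not $\omega$-evadible'' we only need the failure of the long-paths condition of \Cref{lem:allPaths} (the edge-disjoint-cycles clause of \Cref{lem:allCycles} plays no role). One could instead invoke König's infinity lemma to extract a genuine ray from a locally finite infinite connected graph and read off arbitrarily long paths from it, but the finite-ball argument above is self-contained and avoids that appeal.
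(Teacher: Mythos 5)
Your proof is correct and is essentially the paper's own argument run in the contrapositive direction: both rest on the same breadth-first layering, with local finiteness making each distance class finite and the distance-to-path-length correspondence linking infiniteness (or the absence of $P_k$) to arbitrarily long paths. No meaningful difference in method.
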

\begin{proof}
    Let $G$ be connected and $\cat(G)$ is finite. Then there exists a $k$ such that there are no $P_k$ in $G$, nor $k$ edge-disjoint cycles through any vertex. Suppose that $G$ does not contain an infinite degree vertex. Then any vertex $v$ has $d(v)<\infty$ neighbours. Let $S_i$ be the set of vertices of distance $i$ from $v$. Note that $S_i=\emptyset$ for $i> k$, since there are no $P_k$ in the graph. Further, by induction, each $S_i$ is finite, since $S_0=\{v\}$ is finite, and each vertex $v\in S_i$ can contribute at most $d(v)<\infty$ to $S_{i+1}$. Thus the set of vertices reachable from $v\in V(G)$ is finite, and so the connected graph $G$ is finite.
\end{proof}

We conclude this section by noting that this analysis generates a natural generalization of cat number for infinite graphs. We introduce ordinal cat numbers below.

\begin{definition}
    If $\alpha=\{\alpha_i|i\in {\cal I}\}$ is an  ordinal, and the cat is on vertex $v$, and after any edge cut $e$ and any $i$ selected, there exists a legal cat move in $G-e$ to $v'$ with $\cat(G-e,v')\geq \alpha_i$, then \[\cat(G,v)\geq \alpha.\]

    If $\alpha=\{\alpha_i|i\in {\cal I}\}$ is an ordinal, and the cat is on vertex $v$, and there is an edge cut $e$ such that for all legal cat responses $v'$ in $G-e$, we have that $\cat(G-e,v')\leq \alpha_i$ for some $\alpha_i\in\alpha$, then \[\cat(G,v)\leq \alpha.\]
\end{definition}

We leave a deeper analysis of infinite cat herding with this definition in mind for future work.

\section{Conclusion}
Throughout this paper, we have probed the structure of the game of Cat Herding in the extremal cases of very low cat number, as well as infinite or arbitrarily large cat number. We leave further directions on both sides. 

On the low cat number side, the next natural step is to classify all of the graphs of cat number $4$ or higher. We observe that this task may be complicated as the reductions to keep a finite classification only seem to grow in complexity as the cat number grows.

On the large cat number side, we note that transfinite cat numbers are a natural way to move the research forwards. Namely, when the cat can score arbitrarily high, but not win, we observe that our natural definition of cat number ordinals provides a way to carefully investigate this instance. In particular, these may provide the herder with a cohesive idea of how long it will be until they have captured the cat. For instance, a cat number of $\omega+4$ would mean that within $4$ herder-optimal moves, the cat must make a decision as to how much longer they will live. A cat number of $2\omega$ would mean that the cat can choose how long until they must choose how much longer they will survive. This can keep going for as many transfinite ordinals as there are.

\bibliographystyle{abbrv}
\bibliography{sample}
\end{document}